\documentclass[11pt]{article}
\usepackage[T1]{fontenc}
\usepackage{lmodern}
\usepackage{amsmath,amssymb,amsthm}
\usepackage[margin=1.2in]{geometry}
\usepackage{microtype}
\usepackage{needspace}
\theoremstyle{plain}
\newtheorem{theorem}{Theorem}[section]
\newtheorem{lemma}[theorem]{Lemma}
\newtheorem{proposition}[theorem]{Proposition}
\newtheorem{corollary}[theorem]{Corollary}
\theoremstyle{definition}
\newtheorem{definition}[theorem]{Definition}
\newtheorem{example}[theorem]{Example}
\newtheorem{question}[theorem]{Question}
\theoremstyle{remark}
\newtheorem{remark}[theorem]{Remark}
\newcommand{\Z}{\mathbb{Z}}
\newcommand{\N}{\mathbb{N}}
\newcommand{\R}{\mathbb{R}}
\usepackage[hidelinks]{hyperref}
\newif\ifjournal
\ifdefined\JournalVariant \journaltrue \else \journalfalse \fi
\hypersetup{
  pdftitle={Compact Quasi-Metric Spaces Are Expand-Contract Plastic},
  pdfauthor={Thomas Duzer}
}
\ifjournal
\hypersetup{pdfkeywords={quasi-metric space, expand-contract plasticity,
    non-contractive map, hereditarily precompact, compactness,
    Yoneda completeness, left K-sequential completeness,
    asymmetric normed space, unit ball}}
\else
\hypersetup{pdfkeywords={quasi-metric space, expand-contract plasticity,
    non-contractive map, hereditarily precompact, compactness,
    Yoneda completeness, left K-sequential completeness,
    asymmetric normed space, unit ball,
    formal verification, Lean}}
\fi
\title{Compact Quasi-Metric Spaces\\
Are Expand-Contract Plastic}
\author{Thomas Duzer\\[2pt]
\small\texttt{thomas.duzer@gmail.com}\\[1pt]
\small ORCID: \href{https://orcid.org/0009-0007-2147-081X}{0009-0007-2147-081X}}
\date{}
\begin{document}
\maketitle

\begin{abstract}
On a totally bounded metric space every
non-contractive self-map is an isometry. For quasi-metric spaces,
taken $T_1$ throughout ($d(x,y) = 0$ implies $x = y$), this
fails: Zavarzina gave a hereditarily precompact counterexample
and asked whether bijectivity restores rigidity; a recent
example, the toll spiral, shows that it does not. We prove
that compactness does: every
compact quasi-metric space is expand-contract plastic, that is, each
of its bijective non-contractive self-maps is an isometry. Countable
compactness suffices, as does compactness of the conjugate
topology. Bijectivity cannot be dropped: a compact,
hereditarily precompact quasi-metric on the natural numbers makes
the shift non-contractive and not an isometry. For arbitrary
non-contractive self-maps we prove rigidity when the space is
hereditarily precompact and its conjugate topology is sequentially
compact, in particular when it is hereditarily precompact and Yoneda
complete; neither hypothesis can be dropped. In both theorems the
$T_1$ axiom can be weakened to $R_0$ ($d(x,y) = 0$ implies $d(y,x) =
0$), with hereditary precompactness read sequentially (every
sequence has a left K-Cauchy subsequence), but not to $T_0$. We
construct a $T_0$ space with all the remaining hypotheses of both
theorems, Yoneda complete and compact both in its own topology and
in the conjugate one, carrying a bijective non-contractive self-map
that is not an isometry. Under
$R_0$ the two-sided case reduces to the metric theorem, both
topologies being sequentially compact exactly when the symmetrized
distance is compact; the one-sided theorems do not reduce to it,
and plasticity does not pass to the symmetrization even for
compact spaces. In finite dimension the unit ball of a $T_1$
asymmetric normed space, forward or symmetric, and its sphere are
rigid for every non-contractive self-map; without $T_1$ the
forward ball need not be; in infinite dimension the symmetric ball
and sphere are not plastic in general, and the forward versions
carry the quasi-metric form of the unit-ball question of Cascales,
Kadets, Orihuela and Wingler, which is posed.
\ifjournal\else
The two theorems are machine-checked in Lean, and so are the
counterexamples, apart from a few statements listed at the end.
\fi
\end{abstract}

\medskip
\noindent\textbf{2020 MSC:} 54E35 (primary); 46B20, 54D30, 54E40, 54E50,
54E15\ifjournal\else, 68V20\fi.\\
\textbf{Keywords:} quasi-metric space; expand-contract plasticity;
non-contractive map; hereditarily precompact; compactness; Yoneda
completeness; left K-sequential
completeness; asymmetric normed space; unit
ball\ifjournal\else; formal verification; Lean\fi.

\needspace{5\baselineskip}
\section{Introduction}\label{sec:intro}

A classical rigidity theorem of Freudenthal and Hurewicz
\cite{FH1936}, in the totally bounded form of Naimpally,
Piotrowski, and Wingler \cite{NPW2006}, says that a self-map of a
totally bounded metric space cannot expand one pair of points
without contracting another: every non-contractive self-map is an
isometry. Rhodes extended the theorem to totally bounded uniform
spaces \cite{Rhodes1956} (see also \cite{BC1960}); we know of no
version for quasi-uniform spaces. Metric spaces with this property
for bijections are
called \emph{expand-contract plastic}, and their study has grown
into a substantial literature, mostly on unit balls of Banach spaces
and on metric spaces and groups (see \cite{AKZ2019, BMZ2026} and the
references there). A quasi-metric branch has just opened:
Agyingi \cite{Agyingi2026} studies nonexpansive bijections of
$T_0$-quasi-metric spaces and their $q$-hyperconvex hulls; the
relation to the present results is discussed after
Theorem~\ref{thm:upper} and in Example~\ref{ex:sym}.

Zavarzina \cite{Zav2023} asked how much of this survives without
symmetry. For quasi-metric spaces the precompact theorem fails: she
constructed a hereditarily precompact quasi-metric space with a
non-contractive non-isometry \cite[Theorem~2]{Zav2023}, proved that
rigidity returns when both the space and its conjugate are
hereditarily precompact \cite[Theorem~3]{Zav2023}, and asked whether every bijective
non-contractive self-map of a precompact quasi-metric space is an
isometry (the question closing \cite{Zav2023}). It is not: the toll
spiral \cite{Duzer2026} is a hereditarily precompact quasi-metric
space carrying a bijective non-contractive non-isometry.

This paper shows that compactness restores rigidity for bijections,
with no precompactness hypothesis, and determines what is needed for
arbitrary non-contractive maps. Quasi-metrics are taken $T_1$
throughout ($d(x,y) = 0$ implies $x = y$), the convention of
\cite{Zav2023}; for $T_0$ spaces the theorems fail
(Theorem~\ref{thm:introB}).

\begin{theorem}[Compactness forces plasticity]\label{thm:introA}
Every compact quasi-metric space is expand-contract plastic: each of
its bijective non-contractive self-maps is an isometry. Countable
compactness suffices, as does compactness of the conjugate
topology.
\end{theorem}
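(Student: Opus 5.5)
The plan is to pass to the inverse map and reduce to a recurrence statement. Let $f\colon X\to X$ be a bijective non-contractive map and put $g=f^{-1}$. Then $d(gx,gy)\le d(x,y)$ for all $x,y$, so $g$ is non-expansive. It follows that $g(B(x,r))\subseteq B(gx,r)$ for forward balls, and likewise for backward balls. Hence $g$ is continuous both for the topology of $d$ and for the conjugate topology. This is the only place bijectivity enters, and by the shift example mentioned in the abstract it cannot be avoided.

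Suppose, for a contradiction, that $d(fx,fy)>d(x,y)+\varepsilon$ for some $x,y$ and some $\varepsilon>0$. Put $a=fx$ and $b=fy$. Non-expansiveness gives
\[
d(g^na,g^nb)\le d(ga,gb)<d(a,b)-\varepsilon \qquad\text{for all } n\ge1 .
\]
It therefore suffices to find $n\ge1$ with $d(g^na,g^nb)>d(a,b)-\varepsilon$. I will obtain this from the triangle inequality in the form
\[
d(a,b)\le d(a,z)+d(z,w)+d(w,b),
\]
with $z,w$ chosen from the orbit pair. That reduces the theorem to a recurrence lemma for the pair map $G=(g,g)$ on $X\times X$.

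\emph{Recurrence lemma.} For every $\delta>0$ there are $m<n$ such that the pair $(g^ma,g^mb)$ nearly returns to $(g^na,g^nb)$: the first coordinates should be close in the forward direction and the second coordinates close in the backward direction. I would prove this in three steps.
\begin{enumerate}
\item Cover $X$ by finitely many forward balls $B(z_i,\delta)$, using compactness. In the countably compact case, use instead the fact that the infinite orbit has an accumulation point. In the conjugate-compact case, use backward balls and mirror the argument.
\item By pigeonhole, infinitely many indices $n$ place $(g^na,g^nb)$ in the same product cell $B(z,\delta)\times B(w,\delta)$.
\item Apply $f^{k}$, which is non-contractive, to the inequalities $d(z,g^na)<\delta$ to push them along the orbit. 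Bijectivity makes $f^kg^n=g^{n-k}$, so the bounds stay inside the orbit, and one gets inequalities of the form $d(g^{j}a,\,\cdot\,)$ and $d(\,\cdot\,,g^{j}b)$ of the right orientation. Combined with monotonicity of $n\mapsto d(g^na,g^nb)$, this should give $d(g^ma,g^mb)\le d(g^na,g^nb)+4\delta$ for suitable $m<n$.
\end{enumerate}
With $m$ replaced by $0$ via a diagonal choice, this contradicts the strict decrease by $\varepsilon$ once $4\delta<\varepsilon$.

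\emph{Expected obstacle.} The main difficulty is the asymmetry in the recurrence lemma. Two orbit points lying in a common ball $B(z,\delta)$ gives $d(z,\cdot)<\delta$ for both, not $d(\cdot,\cdot)<\delta$ between them. Moreover, the triangle inequality needs forward closeness on the $a$-coordinate but backward closeness on the $b$-coordinate. My first attempt will be to work with the reference distance $d(z,w)$ of the cell centres directly. This means bounding $d(z,w)\le d(z,g^na)+d(g^na,g^nb)+d(g^nb,w)$, and, using $g$-invariance of the bound, bounding $d(g^ma,g^mb)$ against $d(z,w)$ from the other side through non-contractivity of $f$ applied to the centres. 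If the orientations will not line up, I would instead replace forward balls by the topology of the symmetrized distance restricted to the orbit closure. There a $T_1$ compact space with a non-expansive bijection should be handled by a Freudenthal--Hurewicz type argument. The countably compact and conjugate-compact variants then follow by the same argument with the stated modification of the first step.
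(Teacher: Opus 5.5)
Your reduction is correct and is the paper's own last step. With $g=f^{-1}$ non-expansive, it suffices to find one $n\ge1$ at which both $d(a,g^na)$ and $d(g^nb,b)$ are small. The gap is the recurrence lemma: it is asserted (``this should give'') rather than proved, and your sketch cannot be completed, for three reasons.

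\emph{Step~3 runs the wrong way.} Non-contractivity of $f^k$ turns $d(z,g^na)<\delta$ into the \emph{lower} bound $d(f^kz,g^{n-k}a)\ge d(z,g^na)$. Upper bounds travel only along $g$, and they drag the centre $z$ with them.

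\emph{The ``diagonal choice'' replacing $m$ by $0$ is the whole theorem.} For large $m\ge1$ the inequality $d(g^ma,g^mb)\le d(g^na,g^nb)+4\delta$ is trivial, since $n\mapsto d(g^na,g^nb)$ is non-increasing and hence convergent. Near-returns of a non-expansive orbit at later times give nothing at time $0$, because $d(g^ma,g^na)\le d(a,g^{n-m}a)$ bounds the quantity you need from below.

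\emph{Separation is never used.} Neither the reduction nor steps 1--3 invoke the $T_1$ axiom. The argument would therefore apply verbatim to the $T_0$ space of Proposition~\ref{prop:vanish}, whose topology is compact and which carries a bijective non-contractive non-isometry.

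The fallback does not help either. It needs an orbit closure that is compact for $d_s$, and compactness of $\tau_d$ gives no control of $d_s$. In Example~\ref{ex:lowerchain} and in $(\N,d)$ of Proposition~\ref{prop:shift}, infinitely many points are pairwise $d_s$-separated by a fixed constant. In the space of Proposition~\ref{prop:vanish}, so is the orbit $0,1,2,\dots$ of the bijection. So the fallback also hinges on a use of $T_1$ that you have not supplied.

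The missing idea is to apply compactness to the orbit of $f$, where returns do pull back to time $0$, and to use separation to flip a one-sided zero distance. Here is how the paper does it.
\begin{enumerate}
\item Choose $\mu_k$ so that $(f^{\mu_k}a)$ and $(f^{\mu_k}b)$ have lower limits $p,p'$. Then move the centres instead of the orbit: $d(g^{\mu_k}p,a)\le d(p,f^{\mu_k}a)\to0$.
\item A lower limit $z$ of the points $g^{\mu_k}p$ has $d(z,a)=0$. By $T_1$ (only $R_0$ is used) this gives $d(a,z)=0$, hence $d(a,g^{\mu_k}p)\to0$ along a subsequence.
\item Combined with $d(g^{\mu_k}p,f^{\mu_l-\mu_k}a)\le d(p,f^{\mu_l}a)$, this gives $d(a,f^ma)\to0$, simultaneously for $a$ and $b$ (Lemma~\ref{lem:creturns}).
\item Non-expansiveness of $g^m$ converts these into $d(g^mb,b)\le d(b,f^mb)\to0$, and likewise for $a$.
\item One more lower limit of the points $g^ma$, with the same flip, gives $d(a,g^ma)\to0$ along a subsequence.
\end{enumerate}
These are exactly the two estimates your triangle inequality needs. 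The conjugate case follows by applying the result to $d^{-1}$, and the countably compact case via Lemma~\ref{lem:compact}(a).
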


\begin{theorem}[All non-contractive maps]\label{thm:intro}
Let $(X, d)$ be a hereditarily precompact quasi-metric space whose
conjugate topology is sequentially compact; for instance, a
hereditarily precompact space in which every left K-Cauchy sequence
has a Yoneda limit. Then every non-contractive self-map of $X$,
bijective or not, is an isometry.
\end{theorem}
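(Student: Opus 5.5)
We first fix conventions. Let $f\colon X\to X$ be non-contractive, i.e.\ $d(f(x),f(y))\ge d(x,y)$ for all $x,y$. We take \emph{hereditarily precompact} in the sequential form: every sequence has a left K-Cauchy subsequence. Here $(x_n)$ is left K-Cauchy if for every $\varepsilon>0$ there is $N$ with $d(x_m,x_n)<\varepsilon$ for $N\le m\le n$. The conjugate topology is the one generated by $d^{-1}(x,y)=d(y,x)$.

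The plan follows the metric Freudenthal--Hurewicz argument through iterates. Fix $x,y\in X$ and consider the orbit pairs $(f^n x, f^n y)$. Since $n\mapsto d(f^n x,f^n y)$ is nondecreasing, it suffices to find arbitrarily large $k$ and indices $n<m$ with $m-n=k$ such that
\[
d(f^n x, f^m x)\ \text{and}\ d(f^n y,f^m y)\ \text{are both small in one direction,}\quad d(f^m x, f^m y)\ \text{is close to}\ d(f^n x,f^n y).
\]
Non-contractivity then gives
\[
d(x,y)\le d(fx,fy)\le d(f^{k}x,f^{k}y)\lesssim d(x,y)+\text{small},
\]
after transporting the near-recurrence back to time $0$.

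To carry this out, first use hereditary precompactness, together with a diagonal argument, to get a subsequence $n_j$ along which both $(f^{n_j}x)$ and $(f^{n_j}y)$ are left K-Cauchy. Next use sequential compactness of the conjugate topology to pass to a further subsequence converging in $\tau(d^{-1})$ to points $p,q$. Conjugate convergence $f^{n_j}x\to p$ means $d(p,f^{n_j}x)\to 0$. Combined with left K-Cauchy, this gives both $d(f^{n_i}x,f^{n_j}x)\to 0$ for $i<j$ and control from $p$.

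The key reduction is the standard step. The set of pairs $(f^k x,f^k y)$ for $k\ge 1$ comes back close to the pair $(x,y)$ in the direction that bounds $d(f^k x,f^k y)$ from above, and this yields $d(fx,fy)\le d(x,y)$. Concretely, apply non-contractivity to the pairs $(f^{n_i}x, f^{n_j}x)$ with $i<j$:
\[
d(f^{n_j-n_i}x, x)\le d(f^{n_j}x,f^{n_i}x)?
\]
This is the wrong direction, because non-contractivity only bounds distances between \emph{preimages} from above. Instead, $d(x, f^{n_j-n_i}x)\le d(f^{n_i}x,f^{n_j}x)\to 0$. So $x$ is approached by its iterates in the sense $d(x,f^{k}x)\to0$ along $k=n_j-n_i$, and the same holds for $y$ with the same $k$, since the subsequence is common. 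Then the triangle inequality gives
\[
d(f^k x, f^k y)\ \le\ d(f^k x, x)+d(x,y)+d(y,f^k y).
\]
This needs $d(f^kx,x)$ and $d(y,f^ky)$, a mix of directions, and that mixing is the main obstacle: we only know $d(x,f^kx)$ and $d(y,f^ky)$ are small.

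To get the reverse quantity $d(f^kx,x)\to 0$, I would use the conjugate limit. Non-contractivity applied to the pair $(x,p)$-type points is unavailable since $p$ is not an orbit point. Instead I would use $T_1$ (or $R_0$) to show that the conjugate-limit relation forces symmetric smallness. In a $T_1$ space, a point that is both a left-K-Cauchy accumulation and a $d^{-1}$-limit makes $d(f^{n_j}x,p)\to 0$ as well. The argument goes through the gap between $\limsup d(f^{n_j}x,p)$ and the K-Cauchy estimate: compare $d(f^{n_i}x,p)\le d(f^{n_i}x,f^{n_j}x)+d(f^{n_j}x,p)$ and iterate, so that a positive limsup $\delta$ would yield, via $T_1$ applied to a cluster configuration, two distinct points at distance $0$. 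Once $d(f^{n_j}x,p)\to0$ holds in both directions, we get $d(f^{n_j}x,f^{n_i}x)\to0$ both ways and hence $d(f^kx,x)\le d(f^{n_j}x,f^{n_i}x)\to0$. The displayed inequality then gives $d(fx,fy)\le d(f^kx,f^ky)\le d(x,y)+o(1)$, so $d(fx,fy)=d(x,y)$. I expect this two-sided upgrade to be the delicate step, and it is precisely where $T_1$/$R_0$ must be used, consistent with the $T_0$ counterexample.
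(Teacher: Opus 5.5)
Your forward returns are correct and match the paper's Lemma~\ref{lem:returns}. Along a common subsequence on which both orbits are left K-Cauchy, non-contractivity of $f^{n_i}$ gives $d(x,f^kx)\le d(f^{n_i}x,f^{n_j}x)$, and likewise for $y$, with the same $k=n_j-n_i$. You also rightly observe that the final estimate needs only the backward return $d(f^kx,x)\to 0$ for $x$ together with the forward return for $y$.

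The gap is the step meant to supply $d(f^kx,x)\to 0$. First, you have the direction of conjugate convergence reversed. $f^{n_j}x\to p$ in $\tau_{d^{-1}}$ means $d(f^{n_j}x,p)\to 0$, an upper limit; this is why a Yoneda limit is the stated instance. By contrast, $d(p,f^{n_j}x)\to0$ is convergence in $\tau_d$. Read as you wrote it, your two-sided upgrade is refuted by Proposition~\ref{prop:shift}: in the $T_1$ space $(\N,d)$ the shift orbit of $0$ has a left K-Cauchy subsequence $(n_j)$ with $d(0,n_j)\to 0$, yet $d(n_j,0)=g(n_j)\ge g(0)>0$. Read with the correct direction, the claim is false for left K-Cauchy sequences in general, even under every hypothesis of the theorem: in Example~\ref{ex:chain}, $d(x_n,y)\to 0$ but $d(y,x_n)=1$ for all $n$. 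The separation axiom can act only on two points at distance $0$, and these arise from a lower limit and an upper limit of one and the same sequence. For $(f^{n_j}x)$ you have the upper limit $p$ and no lower limit, and the ``cluster configuration'' you invoke is never specified.

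The missing idea is to take the upper limit of the sequence of returns $(f^{k_l}x)$, with $k_l=n_{j_l}-n_{i_l}$, rather than of $(f^{n_j}x)$. The returns already have $x$ itself as a lower limit, since $d(x,f^{k_l}x)\to 0$. Extract a subsequence with an upper limit $z$, either directly from sequential compactness of $\tau_{d^{-1}}$ or by (i) and then (ii) as in the paper. Then $d(x,z)\le d(x,f^{k_l}x)+d(f^{k_l}x,z)\to 0$, so $d(x,z)=0$. Hence $z=x$ by $T_1$ (or $d(z,x)=0$ under $R_0$), and $d(f^{k_l}x,x)\le d(f^{k_l}x,z)+d(z,x)\to 0$. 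The forward returns of $y$ persist along this subsequence, and $d(x,y)\le d(fx,fy)\le d(f^kx,f^ky)\le d(f^kx,x)+d(x,y)+d(y,f^ky)$ closes the argument.

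Your detour through two-sided smallness of $d(f^{n_j}x,f^{n_i}x)$ is unnecessary. Non-contractivity bounds that quantity only from below by $d(f^kx,x)$, so it is strictly harder to reach than the backward return itself.
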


A rigidity argument needs, for a point $a$, returns of an orbit to
$a$ in both directions, and the toll spiral is built exactly to
charge one of them. In Theorem~\ref{thm:introA} both come from
compactness and the inverse map. A subsequence of the forward orbit
has a \emph{lower limit}, a point $p$ with $d(p, F^m a) \to 0$. The
inverse of a non-contractive bijection is non-expansive, so $d(F^{-m}
p, a) \le d(p, F^m a) \to 0$; a lower limit of the points $F^{-m}p$
then sits at distance zero below $a$, and the separation axiom makes
it equal to $a$. This yields forward returns, $d(a, F^m a) \to 0$
along suitable $m$. Pulled back by the inverse they become returns
of the backward orbit from above, the same compactness step supplies
the other direction, and the argument closes for $F^{-1}$. In
Theorem~\ref{thm:intro} there is no backward orbit. Hereditary
precompactness supplies the forward returns, and an \emph{upper
limit} of the returns, a point $y$ with $d(F^m a, y) \to 0$, supplies
the other direction. Both proofs use separation only to exclude
\emph{one-way free moves}, pairs with $d(x,y) = 0 < d(y,x)$. This is
the $R_0$ axiom, and every hypothesis is sharp.

\needspace{7\baselineskip}
\begin{theorem}[Sharpness]\label{thm:introB}
\begin{enumerate}
\item[(a)] There is a $T_0$ quasi-pseudo-metric space, compact in
its own topology and in the conjugate topology, in which every
sequence has a left K-Cauchy subsequence and every left K-Cauchy
sequence has a Yoneda limit, carrying a bijective non-contractive
self-map that is not an isometry.
\item[(b)] There is a compact, hereditarily precompact quasi-metric
space carrying a non-contractive self-map that is not an isometry.
\item[(c)] There is a quasi-metric space with compact conjugate
topology carrying a non-contractive self-map that is not an
isometry.
\end{enumerate}
\end{theorem}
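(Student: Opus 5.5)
For all three parts I would build explicit examples on the integers, or on the naturals, with the shift $F(n)=n+1$ as the map. The distances would be weighted by a strictly increasing function of the starting point. That weight makes the shift non-contractive but not an isometry, while keeping the triangle inequality trivial.

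\emph{Part (b).} Put $h(n)=1-2^{-n-1}\in[\tfrac12,1)$ on $\N$ and define
\[
d(n,m)=\begin{cases} h(n)\,2^{\,n-m}, & n<m,\\ 1, & n>m,\\ 0, & n=m.\end{cases}
\]
\begin{itemize}
\item \emph{Triangle inequality.} Every downward step costs $1$, which exceeds every upward distance. For $n<k<m$ we have $h(n)2^{n-m}\le h(n)2^{n-k}$.
\item \emph{$T_1$.} Distances between distinct points are positive.
\item \emph{Shift.} $d(n+1,m+1)=h(n+1)2^{n-m}>d(n,m)$ for $n<m$, and downward distances are preserved. So $F$ is non-contractive and not an isometry.
\item \emph{Compactness.} $d(0,m)\to0$, so every ball about $0$ contains a tail. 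A cover member containing $0$ therefore leaves only finitely many points uncovered.
\item \emph{Hereditary precompactness.} Every sequence has either a constant subsequence or a strictly increasing one. Thinning an increasing one so that $n_{i+1}-n_i\to\infty$ gives $\sup_{j>i}d(n_i,n_j)\le 2^{n_i-n_{i+1}}\to0$, which is left K-Cauchy.
\end{itemize}

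\emph{Part (c).} Take the conjugate of (b), $d'(n,m)=d(m,n)$. It is still $T_1$. The shift is non-contractive and not an isometry for $d'$, since that condition is symmetric under conjugation. The conjugate topology of $d'$ is the topology of $d$, which is compact.

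\emph{Part (a).} Use the order-type $T_0$ quasi-pseudo-metric on $X=\Z\cup\{-\infty,+\infty\}$, with $d(x,y)=0$ if $x\le y$ and $d(x,y)=h(x)$ if $x>y$. Here $h$ is strictly increasing, $h(-\infty)=0$, $h(n)\to0$ as $n\to-\infty$, and $h(+\infty)=1$; for instance $h(n)=\tfrac12(1+\tanh n)$. Let $F$ be the shift on $\Z$, fixing $\pm\infty$.
\begin{itemize}
\item \emph{Triangle inequality.} Check $x>y$ against an intermediate $k$. If $k\ge x$, the path costs $h(k)\ge h(x)$. If $y<k<x$, it costs $h(x)+h(k)$. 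If $k\le y$, it costs at least $h(x)$.
\item \emph{Shift.} $F$ is a bijection. Since $h(n+1)>h(n)$ it is non-contractive and not an isometry. Pairs involving $\pm\infty$ are unaffected.
\item \emph{$T_0$ but not $T_1$.} For $x\neq y$ one of the two distances is positive, so $T_0$ holds. $T_1$ fails because $d(x,y)=0$ whenever $x<y$.
\item \emph{Compactness.} Every open set containing $-\infty$ is an up-set containing $-\infty$, hence all of $X$. Dually, in the conjugate topology every neighbourhood of $+\infty$ is all of $X$. So both topologies are compact.
\item \emph{Left K-Cauchy subsequences.} Every sequence has a monotone subsequence. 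An increasing one is left K-Cauchy because its forward distances vanish. A strictly decreasing one tends to $-\infty$, so $d(x_i,x_j)=h(x_i)\to0$ for $i<j$. An eventually constant one is trivial.
\item \emph{Yoneda limits.} The candidate for an increasing sequence is its supremum, and for a decreasing one its infimum, both in $X$.
\end{itemize}

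The step I expect to need the most care is checking the Yoneda-limit condition in (a) against the definition used in the paper. One must show $d(\lim,z)=\lim_i d(x_i,z)$, in the form $\inf_i\sup_{j\ge i}d(x_j,z)$, for all $z$.
\begin{itemize}
\item Increasing case: if $x_i$ is eventually $>z$, this is a limit of the values $h(x_i)$ and must equal $h(\sup x_i)$. That forces $h$ to be left-continuous at $+\infty$ in the order sense, which the normalisation above arranges, since $h(n)\to1$ as $n\to+\infty$.
\item Decreasing case: the limit is $-\infty$, both sides vanish, and this uses $h(n)\to0$ as $n\to-\infty$.
\end{itemize}
If the fine print fails, I would adjust $h$ at the endpoints or add the order-limit points explicitly.
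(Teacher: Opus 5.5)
Your proposal is correct once one omission in (a) is filled in. Parts (b) and (c) follow the paper's route; part (a) is a genuinely different and simpler space.

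\emph{Parts (b) and (c).} These are the paper's Proposition~\ref{prop:shift} up to a cosmetic change. The paper charges a descent from $n$ by $g(n)$; you charge it by the constant $1$ and keep the weight below $1$. The checks are the same:
\begin{enumerate}
\item[(1)] the triangle inequality;
\item[(2)] compactness, from $d(0,m)\to 0$;
\item[(3)] thinning an increasing subsequence;
\item[(4)] conjugation for (c).
\end{enumerate}
Like the paper, you rely on the sequential characterization of hereditary precompactness. Here a direct check is also easy: the ball about $\min A$ covers all but finitely many points of $A$.

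\emph{Part (a), comparison.} The paper builds $B\sqcup\Z\sqcup A$, with two layers indexed by the circle and integer positions $n\theta$ for irrational $\theta$. Its distance is the arc distance of positions plus the toll $h(x)$ on descents.

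Your space is exactly that construction with the circle collapsed to a point. Then $\rho\equiv 0$, the layers $B$ and $A$ become $-\infty$ and $+\infty$, $c=1$, and the fee is $h$ restricted to $\Z$. Because the fee is positive, $T_0$ no longer needs injective positions, and every check becomes order-theoretic. Compactness of both topologies is immediate: $-\infty$ has $X$ as its only $\tau_d$-neighbourhood, and $+\infty$ has $X$ as its only $\tau_{d^{-1}}$-neighbourhood. Also, $-\infty$ is a lower limit of every sequence.

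The paper's heavier version buys things the statement does not require. It is a completion of a toll spiral with injective, totally bounded orbit, which ties it to Proposition~\ref{prop:host}. Its compactness comes from the circle rather than from a point whose only neighbourhood is the whole space. For (a) as stated, your space suffices.

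\emph{Part (a), the missing step.} You check Yoneda limits only for monotone sequences, but a left K-Cauchy sequence need not be monotone. For example, $0,1,-2,-1,-4,-3,\dots$ is left K-Cauchy, since $d(x_n,x_m)\le h(x_n)\to 0$. Either of two arguments closes the gap.

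\emph{First fix.} Observe that $\lim_n d(x_n,z)$ exists for every $z$ when $(x_n)$ is left K-Cauchy. Indeed, $d(x_n,z)\le d(x_n,x_m)+d(x_m,z)$ for $m\ge n$ gives $\limsup\le\liminf$. Hence a Yoneda limit of a monotone subsequence is a Yoneda limit of the whole sequence.

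\emph{Second fix.} Run the paper's case split.
\begin{enumerate}
\item[(1)] If infinitely many terms equal $+\infty$, the sequence is eventually $+\infty$, because a step down from $+\infty$ costs $1$.
\item[(2)] Otherwise, suppose some $M$ has infinitely many terms that are integers $\ge M$. In a tail with no $+\infty$ terms and forward distances below $h(M)$, a term $n\ge M$ is followed only by terms $\ge n$. So the sequence is eventually nondecreasing, and its Yoneda limit is its supremum.
\item[(3)] In the remaining case, $h(x_n)\to 0$, so $d(x_n,z)\le h(x_n)\to 0=d(-\infty,z)$, and $-\infty$ is the Yoneda limit.
\end{enumerate}
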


Part (a) is Proposition~\ref{prop:vanish}: a toll spiral whose toll
vanishes toward the deep end, completed by a bottom and a top
circle. Parts (b) and (c) are Proposition~\ref{prop:shift}: the
shift on the natural numbers, for a suitable distance and for its
conjugate. Compactness cannot be dropped from
Theorem~\ref{thm:introA} either, by the toll spiral itself. For
spaces satisfying the $R_0$ axiom the picture is therefore complete,
where ``compact'' may be read as sequentially compact:
\begin{center}
\begin{tabular}{@{}lll@{}}
compact topology & bijective non-contractive maps & all
non-contractive maps\\
\hline
$\tau_d$ only & isometries & not always\\
$\tau_{d^{-1}}$ only & isometries & not always\\
both (equivalently, $d_s$ compact) & isometries & isometries
\end{tabular}
\end{center}
For arbitrary maps, what suffices on one side is precompactness, in
its hereditary form, together with compactness
(Theorem~\ref{thm:intro}) or hereditary precompactness
\cite[Theorem~3]{Zav2023} on the other.

Section~\ref{sec:balls} is where the asymmetric branch meets the
subject's central question. In finite dimension the unit ball of
a $T_1$ asymmetric normed space, forward or symmetric, and its
sphere are rigid for every non-contractive self-map, and $T_1$
cannot be dropped for the forward ball. In infinite dimension the
symmetric ball and
sphere can fail to be plastic: a shift on $\ell_2(\Z)$ whose
symmetrization is the Hilbert norm witnesses the failure
(Example~\ref{ex:shift}), while no linear map can witness it on
the forward ball or sphere. The forward objects carry the quasi-metric form
of the unit-ball question of Cascales, Kadets, Orihuela and
Wingler \cite{CKOW2016}, posed as Question~\ref{q:balls}. For
norms, recent progress on the ball question \cite{HL2026} and the
sphere line \cite{LZ2026} frame the state of the art.

The remaining sections are organized as follows.
Section~\ref{sec:prelim} fixes the vocabulary, and
Section~\ref{sec:main} proves the two theorems. Sequential Smyth
completeness is equivalent under hereditary precompactness to
compactness of the symmetrized metric; each one-sided hypothesis is
strictly weaker (Section~\ref{sec:smyth}). No $R_0$ space with
limits on either side contains a toll spiral with totally bounded
orbit (Section~\ref{sec:toll}), and Section~\ref{sec:open}
constructs the counterexamples; Section~\ref{sec:balls} applies the
theorems to asymmetric unit balls. The two theorems and the
counterexamples are machine-checked in Lean~4 over \texttt{mathlib},
to the extent stated at the end.

\section{Preliminaries}\label{sec:prelim}

A \emph{quasi-pseudo-metric} on a set $X$ is a map $d : X \times X
\to [0, \infty)$ with $d(x,x) = 0$ and $d(x,z) \le d(x,y) +
d(y,z)$. The conjugate is $d^{-1}(x,y) = d(y,x)$ and the
symmetrization is $d_s = d + d^{-1}$, a pseudo-metric. The open sets
of the topology $\tau_d$ of $d$ are those containing a forward ball
$B_d(x, \varepsilon) = \{y : d(x,y) < \varepsilon\}$ around each of
their points; a sequence $(x_n)$ converges to $x$ in $\tau_d$
exactly when $d(x, x_n) \to 0$, and in the conjugate topology
$\tau_{d^{-1}}$ exactly when $d(x_n, x) \to 0$.

\needspace{5\baselineskip}
Three separation axioms are used. The space is
\begin{itemize}
\item[$T_0$] if $d(x,y) = d(y,x) = 0$ implies $x = y$;
\item[$R_0$] if $d(x,y) = 0$ implies $d(y,x) = 0$;
\item[$T_1$] if $d(x,y) = 0$ implies $x = y$.
\end{itemize}
These are the axioms of those names for $\tau_d$, since $x$ lies in
the $\tau_d$-closure of $\{y\}$ exactly when $d(x,y) = 0$; and
$T_1$ is $T_0$ together with $R_0$. A \emph{quasi-metric} is a $T_1$
quasi-pseudo-metric, the convention of \cite{Zav2023}, and
\emph{separated} means $T_1$. When $d$ is $T_0$, $d_s$ is a metric.

A \emph{one-way free move} is a pair with $d(x,y) = 0 < d(y,x)$:
the $R_0$ spaces are exactly those without one-way free moves, and
for $T_0$ spaces the absence of one-way free moves is the $T_1$
axiom. Call $x$ and $y$ \emph{isomorphic}, written
$x \cong y$, when $d(x,y) = d(y,x) = 0$; the $T_0$ condition says
exactly that isomorphic points are equal, that is, that the space
is \emph{skeletal}.

A sequence $(x_n)$ is \emph{left K-Cauchy} if for every
$\varepsilon > 0$ there is $N$ with $d(x_n, x_m) < \varepsilon$
whenever $m \ge n \ge N$, following
\cite[Definition~3]{Zav2023}. In the enriched-categorical reading of
quasi-metric spaces these are the \emph{forward-Cauchy} sequences of
\cite{BvBR1998}. A quasi-pseudo-metric space is \emph{precompact} if for
every $\varepsilon > 0$ finitely many forward balls
$B_d(x, \varepsilon)$ cover it, and \emph{hereditarily precompact} if every
subspace is precompact. By \cite[Theorem~3]{KMRV1993}, as stated in
\cite{Zav2023}, a quasi-metric space is hereditarily precompact if
and only if every sequence has a left K-Cauchy subsequence.

A self-map $F$ is \emph{non-contractive} if $d(Fx, Fy) \ge d(x,y)$
for all $x, y$, \emph{non-expansive} if $d(Fx, Fy) \le d(x,y)$, and
an \emph{isometry} if equality holds. Zavarzina calls a
quasi-metric space \emph{expand-contract plastic} when every
non-expansive bijection of it onto itself is an isometry
\cite[Definition~1]{Zav2023}. A bijection $F$ is non-contractive if
and only if $F^{-1}$ is non-expansive, and $F$ is an isometry if and
only if $F^{-1}$ is, so plasticity says equally that every bijective
non-contractive self-map is an isometry.

\begin{definition}[Limits]\label{def:limits}
Let $(x_n)$ be a sequence in $X$. A point $p$ is a \emph{lower
limit} of $(x_n)$ if $d(p, x_n) \to 0$, that is, if $x_n \to p$ in
$\tau_d$. A point $y$ is an \emph{upper limit} if $d(x_n, y) \to 0$,
that is, if $x_n \to y$ in $\tau_{d^{-1}}$, and a \emph{Yoneda
limit} if $d(y, z) = \lim_n d(x_n, z)$ for every $z \in X$
\cite{BvBR1998, KS2002}. The space is \emph{left K-sequentially
complete} if every left K-Cauchy sequence has a lower limit;
\emph{Yoneda complete} if every left K-Cauchy net has a Yoneda limit
\cite{KS2002}, and \emph{sequentially Yoneda complete} if every left
K-Cauchy sequence has one; and \emph{sequentially Smyth complete} if
every left K-Cauchy sequence converges in $d_s$. Yoneda complete
spaces are sequentially Yoneda complete, and only sequences are used
below.
\end{definition}

Every Yoneda limit is an upper limit: take $z = y$, so that $\lim_n
d(x_n, y) = d(y,y) = 0$. In the standard terminology
\cite{Cobzas2013}, a sequence is \emph{right K-Cauchy} if for every
$\varepsilon > 0$ there is $N$ with $d(x_m, x_n) < \varepsilon$
whenever $m \ge n \ge N$, and a space is \emph{right K-sequentially
complete} if its right K-Cauchy sequences converge in its topology.
The left K-Cauchy sequences of $d$ are the right K-Cauchy sequences
of $d^{-1}$, so every left K-Cauchy sequence of $d$ has an upper
limit exactly when $(X, d^{-1})$ is right K-sequentially complete.

\needspace{8\baselineskip}
\begin{lemma}[Cluster points of left K-Cauchy sequences]
\label{lem:cluster}
Let $(x_n)$ be a left K-Cauchy sequence and $p, y \in X$.
\begin{enumerate}
\item[(a)] If for every $\varepsilon > 0$ there are infinitely many
$n$ with $d(p, x_n) < \varepsilon$, then $p$ is a lower limit of
$(x_n)$.
\item[(b)] If for every $\varepsilon > 0$ there are infinitely many
$n$ with $d(x_n, y) < \varepsilon$, then $y$ is an upper limit of
$(x_n)$.
\end{enumerate}
In particular a lower or upper limit of a subsequence is one of the
whole sequence.
\end{lemma}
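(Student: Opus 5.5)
Both parts follow from the triangle inequality once the index from the left K-Cauchy condition is matched with the right one among the infinitely many good indices. The orientation of the Cauchy condition, $d(x_n, x_m)$ small for $m \ge n \ge N$, decides which way each part goes.

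For (a), fix $\varepsilon > 0$ and choose $N$ with $d(x_n, x_m) < \varepsilon$ whenever $m \ge n \ge N$. By hypothesis there is an index $n \ge N$ with $d(p, x_n) < \varepsilon$. For every $m \ge n$ the triangle inequality gives
\[
d(p, x_m) \le d(p, x_n) + d(x_n, x_m) < 2\varepsilon .
\]
Hence $d(p, x_m) < 2\varepsilon$ for all large $m$, so $d(p, x_m) \to 0$. The point is that $p$ sits below an early term $x_n$, and the Cauchy condition carries smallness forward from $x_n$ to later terms. This is exactly the right orientation.

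For (b) the orientation is reversed, so we choose the good index after $m$ rather than before. Fix $\varepsilon$ and $N$ as above, and let $m \ge N$ be arbitrary. By hypothesis there is an index $n \ge m$ with $d(x_n, y) < \varepsilon$. Then
\[
d(x_m, y) \le d(x_m, x_n) + d(x_n, y) < 2\varepsilon ,
\]
using $n \ge m \ge N$. Thus $d(x_m, y) < 2\varepsilon$ for all $m \ge N$, and $y$ is an upper limit. The only point needing care is this step: the hypothesis must give good indices beyond any prescribed $m$, not just beyond $N$. "Infinitely many" supplies exactly that.

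For the final sentence, let $(x_{n_k})$ be a subsequence with lower limit $p$, so $d(p, x_{n_k}) \to 0$. Then for every $\varepsilon$ infinitely many indices $n = n_k$ satisfy $d(p, x_n) < \varepsilon$, and (a) applies. The upper case follows from (b) in the same way. No separation axiom is used anywhere, so the lemma holds for quasi-pseudo-metrics.
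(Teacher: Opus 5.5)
Your proof is correct and is essentially the paper's own argument. You fix $N$ from the left K-Cauchy condition, take a good index $n \ge N$ in (a) and a good index beyond each $m \ge N$ in (b), and close with the triangle inequality in the orientation the Cauchy condition allows. You also spell out the subsequence clause, which the paper leaves implicit.
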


\begin{proof}
Fix $\varepsilon > 0$ and $N$ with $d(x_n, x_m) < \varepsilon$ for $m
\ge n \ge N$. (a) Choose $n \ge N$ with $d(p, x_n) < \varepsilon$.
Then $d(p, x_m) \le d(p, x_n) + d(x_n, x_m) < 2\varepsilon$ for
every $m \ge n$. (b) For $n \ge N$ choose $m \ge n$ with $d(x_m, y)
< \varepsilon$. Then $d(x_n, y) \le d(x_n, x_m) + d(x_m, y) <
2\varepsilon$.
\end{proof}

\begin{lemma}[Completeness as compactness]\label{lem:compact}
Let $(X, d)$ be a quasi-pseudo-metric space.
\begin{enumerate}
\item[(a)] The topology $\tau_d$ is sequentially compact if and only
if it is countably compact, and it is both when it is compact.
\item[(b)] If $\tau_d$ is sequentially compact, then $(X, d)$ is
left K-sequentially complete. If every sequence has a left K-Cauchy
subsequence, the converse holds.
\item[(c)] If $\tau_{d^{-1}}$ is sequentially compact, then every
left K-Cauchy sequence of $d$ has an upper limit. If every sequence
has a left K-Cauchy subsequence, the converse holds.
\item[(d)] Let $(X, d)$ satisfy the $R_0$ axiom. Then $\tau_d$ and
$\tau_{d^{-1}}$ are both sequentially compact if and only if the
pseudo-metric $d_s$ is compact.
\end{enumerate}
\end{lemma}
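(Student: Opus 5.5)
Part (a) is general topology and rests on first countability of $\tau_d$. Parts (b) and (c) follow from Lemma~\ref{lem:cluster} by extracting subsequences. Part (d) combines the two one-sided limits, and the $R_0$ axiom is what allows that combination.

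For (a), I will use that $\tau_d$ is first countable: the forward balls $B_d(x, 1/k)$ form a countable neighbourhood base at $x$. No separation axiom is needed.
\begin{itemize}
\item Compact spaces are countably compact, in any topology.
\item A topological space is countably compact exactly when every sequence has a cluster point, meaning a point each of whose neighbourhoods contains $x_n$ for infinitely many $n$. This characterisation holds without any separation axiom.
\item If $\tau_d$ is sequentially compact, the limit of a convergent subsequence is such a cluster point, so $\tau_d$ is countably compact.
\item Conversely, let $p$ be a cluster point of $(x_n)$. Choose $n_1 < n_2 < \cdots$ with $d(p, x_{n_k}) < 1/k$. Then $x_{n_k} \to p$ in $\tau_d$, so $\tau_d$ is sequentially compact.
\end{itemize}

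For (b), take a left K-Cauchy sequence. Sequential compactness of $\tau_d$ gives a subsequence and a point $p$ with $d(p, x_{n_k}) \to 0$. By Lemma~\ref{lem:cluster}(a), $p$ is a lower limit of the whole sequence, so $(X,d)$ is left K-sequentially complete. For the converse, let $(x_n)$ be any sequence. By hypothesis it has a left K-Cauchy subsequence, and completeness gives that subsequence a lower limit, which is exactly a $\tau_d$-limit.

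Part (c) is the same argument for $\tau_{d^{-1}}$. Convergence $x_{n_k} \to y$ in $\tau_{d^{-1}}$ means $d(x_{n_k}, y) \to 0$, so $y$ is an upper limit of the subsequence. Lemma~\ref{lem:cluster}(b) then makes $y$ an upper limit of the whole sequence. The left K-Cauchy property is still taken with respect to $d$, as in the statement.

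For (d), suppose first that $d_s$ is compact. A compact pseudo-metric space is sequentially compact. Since $d \le d_s$ and $d^{-1} \le d_s$, a $d_s$-convergent subsequence converges in both $\tau_d$ and $\tau_{d^{-1}}$.

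For the converse, which is the step I expect to be the main point, let $(x_n)$ be any sequence.
\begin{enumerate}
\item Pass to a subsequence with a $\tau_d$-limit $p$, so $d(p, x_n) \to 0$.
\item Pass to a further subsequence with a $\tau_{d^{-1}}$-limit $q$, so $d(x_n, q) \to 0$.
\item Then $d(p,q) \le d(p,x_n) + d(x_n,q) \to 0$, so $d(p,q) = 0$.
\item The $R_0$ axiom gives $d(q,p) = 0$. Hence $d(x_n, p) \le d(x_n, q) + d(q, p) \to 0$.
\item Together with $d(p, x_n) \to 0$, this gives $d_s(p, x_n) \to 0$.
\end{enumerate}
So $d_s$ is sequentially compact, and for a pseudo-metric this is equivalent to compactness.

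Without $R_0$ the argument breaks at step 4: a one-way free move $d(p,q) = 0 < d(q,p)$ would leave the two limits unreconciled.
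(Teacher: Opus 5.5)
Your proof is correct and follows the paper's argument part for part. In (b) and (c) you extract a convergent subsequence and pass its limit to the whole sequence by Lemma~\ref{lem:cluster}, and in (d) you reconcile a lower limit $p$ and an upper limit $q$ of a common subsequence through $d(p,q)=0$ and the $R_0$ axiom, exactly as the paper does. The only difference is in (a): you cite the standard cluster-point characterization of countable compactness, while the paper proves both of its directions inline, with the nested closures $\overline{\{x_k : k \ge n\}}$ for one and a sequence with $x_n \notin U_1 \cup \dots \cup U_n$ for the other.
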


\begin{proof}
(a) Let $\tau_d$ be countably compact and $(x_n)$ a sequence. The
closed sets $\overline{\{x_k : k \ge n\}}$ decrease and are
non-empty, so they have a common point $p$, and every ball $B_d(p,
1/k)$ contains $x_n$ for infinitely many $n$; choosing $n_1 < n_2 <
\cdots$ with $d(p, x_{n_k}) < 1/k$ gives a subsequence converging to
$p$. Conversely, if a countable open cover $U_1, U_2, \dots$ had no
finite subcover, a sequence with $x_n \notin U_1 \cup \dots \cup
U_n$ would have a subsequence converging to a point of some $U_k$,
hence lying in $U_k$ for infinitely many $n$, although $x_n \notin
U_k$ for $n \ge k$. (b) and
(c): a left K-Cauchy sequence has a convergent subsequence in the
sequentially compact topology, and Lemma~\ref{lem:cluster} passes
its limit to the whole sequence. For the converses, a left K-Cauchy
subsequence has a lower limit, respectively an upper limit, and
therefore converges in $\tau_d$, respectively in $\tau_{d^{-1}}$.
(d) Let both topologies be sequentially compact and $(x_n)$ a
sequence. Some subsequence has a lower limit $p$, and a further
subsequence has an upper limit $y$. Along it $d(p, y) \le d(p, x_n) +
d(x_n, y) \to 0$, so $d(p, y) = 0$, and $d(y, p) = 0$ by $R_0$.
Hence $d(x_n, p) \le d(x_n, y) + d(y, p) \to 0$, and the
subsequence converges to $p$ in $d_s$: the pseudo-metric $d_s$ is
sequentially compact, hence compact. The converse is immediate,
since $d \le d_s$ and $d^{-1} \le d_s$.
\end{proof}

\needspace{10\baselineskip}
\section{Two rigidity theorems}\label{sec:main}

Three hypotheses on a quasi-pseudo-metric space $(X, d)$ are used
besides compactness:
\begin{enumerate}
\item[(i)] every sequence in $X$ has a left K-Cauchy subsequence;
\item[(ii)] every left K-Cauchy sequence in $X$ has an upper limit;
\item[(iii)] $X$ has no one-way free move.
\end{enumerate}
For quasi-metric spaces (i) is hereditary precompactness. Hypothesis
(iii) is the $R_0$ axiom, and (ii) is right K-sequential
completeness of $(X, d^{-1})$. By Lemma~\ref{lem:compact}, in the
presence of (i), hypothesis (ii) says that $\tau_{d^{-1}}$ is
sequentially compact.

\begin{lemma}[Returns from compactness]\label{lem:creturns}
Let $(X, d)$ satisfy (iii) and let $\tau_d$ be sequentially compact.
Let $F$ be a bijective non-contractive self-map of $X$, and let $a,
b \in X$. Then there are integers $m_i \ge 1$ with
\[
d(a, F^{m_i}a) < 1/i \qquad\text{and}\qquad d(b, F^{m_i}b) < 1/i
\qquad (i = 1, 2, \dots).
\]
\end{lemma}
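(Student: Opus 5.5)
The plan is to reduce to a single point by passing to the product, and then to run the lower-limit argument sketched in the introduction for that one point.

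\emph{Reduction to one point.} On $X \times X$ take the quasi-pseudo-metric $D((x,y),(x',y')) = \max\{d(x,x'), d(y,y')\}$. Three facts need checking, and all are routine:
\begin{enumerate}
\item[(1)] $D$ satisfies (iii): if $D = 0$, both coordinates vanish, and $R_0$ applies in each coordinate.
\item[(2)] $\tau_D$ is the product topology, and it is sequentially compact. Given a sequence, extract a $\tau_d$-convergent subsequence in the first coordinate, then a further one in the second. Each coordinate then has a lower limit along the final subsequence, so the pair of these lower limits is a lower limit for $D$.
\item[(3)] $F \times F$ is a bijective non-contractive self-map of $(X \times X, D)$.
\end{enumerate}
It therefore suffices to prove the statement for the single point $(a,b)$, since $D((a,b), (F \times F)^m(a,b)) < 1/i$ gives both required inequalities. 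From now on I only need: for one point $a$ and every $\varepsilon > 0$ there is $m \ge 1$ with $d(a, F^m a) < \varepsilon$.

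\emph{One point.} Write $G = F^{-1}$, which is non-expansive. By sequential compactness of $\tau_d$, the orbit $(F^n a)$ has a subsequence $n_1 < n_2 < \cdots$ with a lower limit $p$, so $d(p, F^{n_k} a) \to 0$. Put $q_k = G^{n_k} p$. Non-expansiveness of $G^{n_k}$ gives
\[
d(q_k, a) \le d(p, F^{n_k} a) \to 0 .
\]
Again by sequential compactness, some subsequence of $(q_k)$ has a lower limit $q$. Along that subsequence,
\[
d(q,a) \le d(q,q_k) + d(q_k,a) \to 0,
\]
so $d(q,a) = 0$, and (iii) gives $d(a,q) = 0$. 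Hence $d(a, q_k) \le d(a,q) + d(q,q_k) \to 0$ along the subsequence. Relabel so that this holds for the whole sequence.

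\emph{Closing step.} For $l > k$ set $m = n_l - n_k \ge 1$. Applying the non-expansive map $G^{n_k}$ to the pair $(p, F^{n_l} a)$ gives
\[
d(q_k, F^{m} a) \le d(p, F^{n_l} a).
\]
Therefore
\[
d(a, F^m a) \le d(a, q_k) + d(q_k, F^m a) \le d(a,q_k) + d(p, F^{n_l} a).
\]
Choose $k$ with $d(a,q_k) < \varepsilon/2$, then $l > k$ with $d(p, F^{n_l} a) < \varepsilon/2$. This gives the return $d(a, F^m a) < \varepsilon$.

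The main obstacle is this last step: non-contractivity only bounds $d(F^{n_k}a, \cdot)$ from below, so the naive estimate through $d(F^{n_k} a, p)$ goes the wrong way. The trick is to pull $p$ back by $G^{n_k}$, where non-expansiveness points in the useful direction, and to use (iii) to convert the limit $d(q,a) = 0$ into $d(a,q) = 0$.
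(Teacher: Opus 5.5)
Your proof is correct and is essentially the paper's argument: you take a lower limit $p$ of a subsequence of the forward orbit, pull it back to $G^{n_k}p$, use (iii) to turn $d(q,a)=0$ into $d(a,q)=0$ for a lower limit $q$ of those points, and close with $G^{n_k}F^{n_l}=F^{n_l-n_k}$. The only difference is bookkeeping: you treat the pair $a,b$ as a single point of $X\times X$ with the maximum distance $D$ and the map $F\times F$, whereas the paper extracts common subsequences for $a$ and $b$ at each of the two compactness steps.
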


\begin{proof}
Let $G = F^{-1}$. Applying the non-contractivity of $F$ to the pair
$(Gx, Gy)$ shows that $G$ is non-expansive, and so is every iterate
$G^n$. By sequential compactness, applied twice, there are $\mu_1 <
\mu_2 < \cdots$ such that $(F^{\mu_k}a)_k$ and $(F^{\mu_k}b)_k$ have
lower limits $p$ and $p'$. Applying $G^{\mu_k}$ to the pair $(p,
F^{\mu_k}a)$,
\[
d(G^{\mu_k}p, a) \le d(p, F^{\mu_k}a) \to 0,
\qquad\text{and likewise}\qquad d(G^{\mu_k}p', b) \to 0 .
\]
By sequential compactness again, there is an infinite set $K$ of
indices along which $(G^{\mu_k}p)_k$ and $(G^{\mu_k}p')_k$ have
lower limits $z$ and $z'$. Then $d(z, a) \le d(z, G^{\mu_k}p) +
d(G^{\mu_k}p, a) \to 0$ along $K$, so $d(z, a) = 0$, and $d(a, z) =
0$ by (iii). Hence $d(a, G^{\mu_k}p) \le d(a, z) + d(z, G^{\mu_k}p)
\to 0$ along $K$, and likewise $d(b, G^{\mu_k}p') \to 0$.

Now let $\varepsilon > 0$. Fix $k \in K$ with $d(a, G^{\mu_k}p) <
\varepsilon$ and $d(b, G^{\mu_k}p') < \varepsilon$, and then $l > k$
with $d(p, F^{\mu_l}a) < \varepsilon$ and $d(p', F^{\mu_l}b) <
\varepsilon$. Since $G^{\mu_k}F^{\mu_l} = F^{\mu_l - \mu_k}$,
\[
d(G^{\mu_k}p, F^{\mu_l - \mu_k}a) \le d(p, F^{\mu_l}a) < \varepsilon,
\]
so $d(a, F^m a) < 2\varepsilon$ for $m = \mu_l - \mu_k \ge 1$, and
likewise $d(b, F^m b) < 2\varepsilon$.
\end{proof}

\begin{theorem}[Compactness forces plasticity]\label{thm:lower}
Let $(X, d)$ be a quasi-pseudo-metric space satisfying (iii) whose
topology $\tau_d$ is sequentially compact. Then every bijective
non-contractive self-map of $X$ is an isometry. The same holds if
$\tau_{d^{-1}}$ is sequentially compact instead.
\end{theorem}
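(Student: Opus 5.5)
The plan is to use Lemma~\ref{lem:creturns} to push a pair of points simultaneously back near themselves along the forward orbit. Let $F$ be a bijective non-contractive self-map and fix $a, b \in X$. We already have $d(Fa, Fb) \ge d(a,b)$. Iterating non-contractivity gives $d(F^m a, F^m b) \ge d(Fa, Fb)$ for every $m \ge 1$.

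Lemma~\ref{lem:creturns} supplies $m_i \ge 1$ with $d(a, F^{m_i}a) < 1/i$ and $d(b, F^{m_i}b) < 1/i$. The triangle inequality then gives
\[
d(F^{m_i}a, F^{m_i}b) \le d(F^{m_i}a, a) + d(a,b) + d(b, F^{m_i}b).
\]
This needs the returns in the \emph{other} direction: $d(F^{m_i}a, a)$ small, not $d(a, F^{m_i}a)$. So the main step is to upgrade the one-sided returns to two-sided ones.

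For that upgrade I would use (iii) and sequential compactness once more. Pass to a subsequence along which $F^{m_i}a$ has a lower limit $q$. Then $d(a, F^{m_i}a) \to 0$ and $d(q, F^{m_i}a) \to 0$, which by itself says nothing about $d(F^{m_i}a, a)$. This naive route is where I expect the obstacle.

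The alternative I would try first avoids the upgrade by using the inverse symmetry. The inequality
\[
d(a,b) \le d(a, F^{m}a) + d(F^m a, F^m b) + d(F^m b, b)
\]
also points the wrong way. Instead, I would apply non-contractivity to the pair $(F a, F^{m} b)$ in the form $d(F^{m}a, F^{m}b) \ge d(Fa, Fb)$ and estimate $d(Fa,Fb)$ from above:
\[
d(Fa, Fb) \le d(Fa, F^{m}a) + d(F^{m}a, F^{m}b) + d(F^{m}b, Fb) .
\]
Non-contractivity gives $d(Fa, F^m a) \ge d(a, F^{m-1}a)$, which again bounds the wrong way. The clean route is to use $G = F^{-1}$, which is non-expansive:
\[
d(Fa, Fb) \le d(Fa, F^{m+1}a) + d(F^{m+1}a, F^{m+1}b) + d(F^{m+1}b, Fb).
\]
Here $d(Fa, F^{m+1}a) = d(Fa, F^{m}(Fa))$, and Lemma~\ref{lem:creturns} applied to the pair of points $(Fa, Fb)$ makes this small, and likewise the first return term. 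The remaining troublesome term is $d(F^{m+1}b, Fb)$.

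The reliable fix is to apply Lemma~\ref{lem:creturns} to $G$ rather than $F$. This is legitimate only if $G$ is non-contractive, which it is not. So instead I use the lemma's proof idea for the reversed inequality: I aim to show $d(Fa, Fb) \le d(a,b)$ by writing
\[
d(Fa, Fb) \le d(Fa, F^{m}a) + d(F^{m}a, F^{m}b) + \dots
\]
and I expect this bookkeeping to be the main obstacle. My concrete plan for it is to choose returns of the pair $(Fa, b)$ from Lemma~\ref{lem:creturns}, so that $d(Fa, F^{m}Fa) < \varepsilon$ and $d(b, F^{m}b) < \varepsilon$. Applying $G^{m}$ (non-expansive) to the pair $(F^{m}b, \cdot)$ then converts the return of $b$ into the needed direction:
\[
d(Fa, Fb) \le d(Fa, F^{m+1}a) + d(F^{m+1}a, Fb) \le \varepsilon + d(F^{m}a, b).
\]
The last step uses non-expansivity of $G$, and then $d(F^{m}a, b) \le d(F^m a, F^m b) + d(F^m b, b)$, which still needs downward returns of $b$. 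So I would first establish that (iii) plus compactness makes forward returns two-sided. The idea is to run Lemma~\ref{lem:creturns}'s lower-limit argument on $G$-orbits: the points $G^{\mu_k}p$ converge to $a$ from both sides thanks to (iii). That yields $d(F^{m}b, b) \to 0$ along the same $m$, and the isometry inequality closes.

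For the $\tau_{d^{-1}}$ case, I would apply the result to $(X, d^{-1})$. It satisfies (iii), $F$ is still a non-contractive bijection for $d^{-1}$, and isometry for $d^{-1}$ is isometry for $d$.
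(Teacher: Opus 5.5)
Your proposal has a genuine gap, at the step you flag as the main obstacle. The chain $d(Fa,Fb) \le d(F^m a, F^m b) \le d(F^m a, a) + d(a,b) + d(b, F^m b)$ is valid. But it needs the forward orbit to return \emph{from below}, $d(F^m a, a) \to 0$, along the same times $m$ as the returns of Lemma~\ref{lem:creturns}, and your proposed source for this does not deliver it.

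In the lemma's proof one does get $G^{\mu_k}p \to a$ from both sides. Now take $m = \mu_l - \mu_k$, so that $F^m a = G^{\mu_k}F^{\mu_l}a$. Bounding $d(F^m a, a)$ through $G^{\mu_k}p$ needs $d(G^{\mu_k}F^{\mu_l}a, G^{\mu_k}p) \le d(F^{\mu_l}a, p) \to 0$. That is, $p$ would have to be an \emph{upper} limit of $F^{\mu_l}a$, but it is only a lower limit. An upper limit would require sequential compactness of $\tau_{d^{-1}}$, which is not assumed. This is the obstruction you met in your ``naive route'': the free forward return $d(a, F^m a) \to 0$ and a lower limit do not compose.

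Separately, the step $d(F^{m+1}a, Fb) \le d(F^m a, b)$ is reversed. Non-expansivity of $G$ gives $d(F^m a, b) \le d(F^{m+1}a, Fb)$.

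The missing idea is to run the argument on $G = F^{-1}$ and prove that $G$ is non-contractive. On the backward orbit the free return points the useful way. Applying the non-expansive $G^{m_i}$ to the pairs $(a, F^{m_i}a)$ and $(b, F^{m_i}b)$ gives $d(G^{m_i}a, a) < 1/i$ and $d(G^{m_i}b, b) < 1/i$. Your upgrade now works, for $a$. A lower limit $p$ of a subsequence $(G^{m_{i_k}}a)_k$ satisfies $d(p,a) \le d(p, G^{m_{i_k}}a) + d(G^{m_{i_k}}a, a) \to 0$. So $d(a,p) = 0$ by (iii), and hence $d(a, G^{m_{i_k}}a) \to 0$. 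With $m = m_{i_k}$ and $G^{m-1}$ non-expansive,
\[
d(a,b) \le d(a, G^m a) + d(G^m a, G^m b) + d(G^m b, b) \le d(a, G^m a) + d(Ga, Gb) + d(G^m b, b),
\]
so $d(a,b) \le d(Ga,Gb) \le d(a,b)$. Thus $G$ is an isometry, and so is $F$.

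Only one point needs the upgrade, and it is done on the orbit whose known return is from below, where a lower limit can be combined with it. Your reduction of the $\tau_{d^{-1}}$ case to $(X, d^{-1})$ is correct.
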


\begin{proof}
Let $F$ be bijective and non-contractive, $G = F^{-1}$, and $a, b
\in X$; as in the proof of Lemma~\ref{lem:creturns}, every iterate
$G^n$ is non-expansive.

\emph{Returns of the backward orbit.} Let $(m_i)$ be as in
Lemma~\ref{lem:creturns}. Applying $G^{m_i}$ to the pair $(a,
F^{m_i}a)$,
\[
d(G^{m_i}a, a) \le d(a, F^{m_i}a) < 1/i,
\qquad\text{and likewise}\qquad d(G^{m_i}b, b) < 1/i .
\]

\emph{The other direction for $a$.} By sequential compactness, some
subsequence $(G^{m_{i_k}}a)_k$ has a lower limit $p$. Then
\[
d(p, a) \le d(p, G^{m_{i_k}}a) + d(G^{m_{i_k}}a, a) \to 0,
\]
so $d(p, a) = 0$, and by (iii) $d(a, p) = 0$. Hence $d(a,
G^{m_{i_k}}a) \le d(a, p) + d(p, G^{m_{i_k}}a) \to 0$.

\emph{Conclusion.} Writing $m = m_{i_k} \ge 1$, the triangle
inequality and the non-expansivity of $G^{m-1}$ give
\[
d(a, b) \le d(a, G^m a) + d(G^m a, G^m b) + d(G^m b, b) \le d(a, G^m
a) + d(Ga, Gb) + d(G^m b, b),
\]
and the outer terms tend to $0$ as $k \to \infty$. So $d(a, b) \le
d(Ga, Gb) \le d(a, b)$: the map $G$ is an isometry, and therefore so
is $F$.

For the last statement, $F$ is bijective and non-contractive for
$d^{-1}$ as well, and (iii) is symmetric in $d$ and $d^{-1}$; apply
what has been proved to $(X, d^{-1})$.
\end{proof}

For maps that are not bijective there is no backward orbit, and
compactness alone does not suffice (Proposition~\ref{prop:shift}).
Forward returns then come from hypothesis (i).

\begin{lemma}[Returns from precompactness]\label{lem:returns}
Let $(X, d)$ satisfy (i), let $F$ be a non-contractive self-map of
$X$, and let $a, b \in X$. Then there are integers $m_i \ge 1$ with
$d(a, F^{m_i}a) < 1/i$ and $d(b, F^{m_i}b) < 1/i$ for $i = 1, 2,
\dots$.
\end{lemma}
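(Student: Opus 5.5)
The plan is to use hypothesis (i) on the joint orbit, exactly as in the classical Freudenthal--Hurewicz argument. First I pass from the two sequences $(F^n a)$ and $(F^n b)$ to one subsequence along which both are left K-Cauchy. Apply (i) to $(F^n a)_n$ to get indices $n_1 < n_2 < \cdots$ along which $(F^{n_k} a)_k$ is left K-Cauchy. Then apply (i) again to $(F^{n_k} b)_k$ to get a further subsequence, still denoted $(n_k)$. Along it both $(F^{n_k}a)$ and $(F^{n_k}b)$ are left K-Cauchy, since a subsequence of a left K-Cauchy sequence is again left K-Cauchy.

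Next I transfer smallness back to $a$ and $b$ using non-contractivity. Fix $i$ and choose $N$ such that $d(F^{n_k}a, F^{n_l}a) < 1/i$ and $d(F^{n_k}b, F^{n_l}b) < 1/i$ whenever $l \ge k \ge N$. Take $k = N$ and any $l > k$, and set $m = n_l - n_k \ge 1$. Since $F^{n_k}$ is non-contractive (iterates of non-contractive maps are non-contractive), applying it to the pair $(a, F^m a)$ gives
\[
d(a, F^m a) \le d(F^{n_k}a, F^{n_k + m}a) = d(F^{n_k}a, F^{n_l}a) < 1/i .
\]
The same computation gives $d(b, F^m b) < 1/i$ with the same $m$. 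Setting $m_i = m$ finishes the proof.

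No step is really an obstacle. The only points needing care are the direction of the inequalities and the choice of indices. Left K-Cauchy controls $d(x_n, x_m)$ for $m \ge n$, which is exactly $d(F^{n_k}a, F^{n_l}a)$ with the earlier point first. That quantity dominates $d(a, F^m a)$, the forward return, because non-contractivity only lets one pull distances back from image pairs to original pairs. Separation (iii) is not needed here, and neither is bijectivity.
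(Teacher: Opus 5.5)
Your proposal is correct and matches the paper's proof: both apply (i) twice to get indices $n_1 < n_2 < \cdots$ along which both orbits are left K-Cauchy, take $m_i$ as a difference $n_l - n_k$ of two such indices beyond the Cauchy threshold, and pull the bound back to $a$ and $b$ through the non-contractivity of $F^{n_k}$. The only cosmetic difference is that the paper takes the consecutive indices $m_i = n_{j_i+1} - n_{j_i}$, while you allow any $l > k$.
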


\begin{proof}
Every iterate $F^n$ is non-contractive. By (i), applied twice, there
are $n_1 < n_2 < \cdots$ such that $(F^{n_j}a)_j$ and $(F^{n_j}b)_j$
are both left K-Cauchy. For each $i$ choose $j_i$ with $d(F^{n_j}a,
F^{n_l}a) < 1/i$ and $d(F^{n_j}b, F^{n_l}b) < 1/i$ whenever $l > j
\ge j_i$, and put $m_i := n_{j_i + 1} - n_{j_i} \ge 1$. Applying the
non-contractivity of $F^{n_{j_i}}$ to the pair $(a, F^{m_i}a)$,
\[
d(a, F^{m_i}a) \le d(F^{n_{j_i}}a, F^{n_{j_i + 1}}a) < 1/i,
\]
and likewise for $b$.
\end{proof}

\begin{theorem}[Upper limits force isometries]\label{thm:upper}
Let $(X, d)$ be a quasi-pseudo-metric space satisfying (i), (ii),
and (iii). Then every non-contractive self-map of $X$ is an
isometry.
\end{theorem}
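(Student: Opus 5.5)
The plan is to mirror the proof of Theorem~\ref{thm:lower}, with Lemma~\ref{lem:returns} supplying the forward returns and an upper limit, from (i) and (ii), supplying the reverse direction. Throughout I use that every iterate $F^n$ is non-contractive. Fix $a, b \in X$. By Lemma~\ref{lem:returns} there are integers $m_i \ge 1$ with $d(a, F^{m_i}a) < 1/i$ and $d(b, F^{m_i}b) < 1/i$.

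\emph{The other direction.} By (i), some subsequence $(F^{m_{i_k}}a)_k$ is left K-Cauchy. By (ii) it has an upper limit $y$, so $d(F^{m_{i_k}}a, y) \to 0$. Then
\[
d(a, y) \le d(a, F^{m_{i_k}}a) + d(F^{m_{i_k}}a, y) \to 0,
\]
so $d(a,y) = 0$, and by (iii) $d(y,a) = 0$. Hence
\[
d(F^{m_{i_k}}a, a) \le d(F^{m_{i_k}}a, y) + d(y, a) \to 0 .
\]
Passing to a further subsequence and repeating the argument for $b$, I obtain a single sequence of exponents $m = m_{i_k} \ge 1$ along which all four quantities $d(a, F^m a)$, $d(F^m a, a)$, $d(b, F^m b)$ and $d(F^m b, b)$ tend to $0$. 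Only the two terms $d(F^m a, a)$ and $d(b, F^m b)$ are actually needed below.

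\emph{Conclusion.} Non-contractivity of $F^{m-1}$ applied to the pair $(Fa, Fb)$ gives the first inequality below, and the triangle inequality gives the second:
\[
d(Fa, Fb) \le d(F^m a, F^m b) \le d(F^m a, a) + d(a, b) + d(b, F^m b).
\]
Letting $k \to \infty$ yields $d(Fa, Fb) \le d(a,b)$. Combined with non-contractivity, this gives equality, so $F$ is an isometry.

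The only delicate step is the reverse return $d(F^m a, a) \to 0$. There is no inverse map here, so the lower-limit trick of Lemma~\ref{lem:creturns} is unavailable. This is exactly where (ii) and (iii) enter: (ii) provides the upper limit $y$, and (iii) converts $d(a,y) = 0$ into $d(y,a) = 0$. One must also take care to extract the subsequences for $a$ and $b$ successively, so that a common index set serves both points.
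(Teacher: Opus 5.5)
Your proof is correct and is essentially the paper's: you take forward returns from Lemma~\ref{lem:returns}, obtain a backward return for $a$ through an upper limit and (iii), and close with the same chain of inequalities through the non-contractivity of $F^{m-1}$. The backward return for $b$ and the care about a common index set are unnecessary, since $d(b, F^{m_i}b) < 1/i$ already holds along every subsequence of $(m_i)$, and this is how the paper proceeds.
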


\begin{proof}
Let $F$ be non-contractive, $a, b \in X$, and $(m_i)$ as in
Lemma~\ref{lem:returns}.

\emph{A backward return for $a$.} By (i), some subsequence
$(F^{m_{i_k}}a)_k$ is left K-Cauchy; by (ii) it has an upper limit
$y$. Then
\[
d(a, y) \le d(a, F^{m_{i_k}}a) + d(F^{m_{i_k}}a, y) \to 0,
\]
so $d(a, y) = 0$, and by (iii) $d(y, a) = 0$. Hence $d(F^{m_{i_k}}a,
a) \le d(F^{m_{i_k}}a, y) + d(y, a) \to 0$.

\emph{Conclusion.} Writing $m = m_{i_k} \ge 1$, non-contractivity of
$F^{m-1}$ and the triangle inequality give
\[
d(a, b) \le d(Fa, Fb) \le d(F^m a, F^m b) \le d(F^m a, a) + d(a, b)
+ d(b, F^m b),
\]
and the right side tends to $d(a, b)$ as $k \to \infty$. So $d(Fa,
Fb) = d(a, b)$.
\end{proof}

The argument of Theorem~\ref{thm:upper} is the classical recurrence
argument for totally bounded metric spaces \cite{FH1936, NPW2006},
in which both directions of return come free. What is added is the
observation that, in the asymmetric setting, an upper limit supplies
the missing direction; that for a bijection compactness alone
supplies both directions, through the inverse map; and that
separation is used only as the $R_0$ axiom. We found no statement of
either theorem in the literature on plasticity or on asymmetric
completeness.

What is in the literature is the symmetrized
statement: if $d_s$ is totally bounded, every non-contractive
self-map is an isometry. The argument is Zavarzina's; the
statement is recorded for quasi-metric spaces in
\cite[Proposition~7.2]{Duzer2026} and, for bijections of $T_0$
spaces, by Agyingi \cite[Proposition~2.2]{Agyingi2026}, who
likewise credits \cite{Zav2023}. His study of $q$-hyperconvex hulls, which
appeared while this paper was in preparation, calls a
$T_0$-quasi-metric space \emph{$q$-plastic} when its bijective
nonexpansive self-maps are isometries, the present property read
through inverses. The compact case is recalled in
Proposition~\ref{prop:smyth}(c) below. All compactness in
\cite{Agyingi2026} refers to the symmetrization. Neither theorem
reduces to the symmetrized statement: the space of
Proposition~\ref{prop:shift} is compact and plastic, and the space
of Example~\ref{ex:chain} satisfies (i)--(iii), while in both the
symmetrization is not totally bounded. In the $T_0$ setting of
\cite{Agyingi2026}, Proposition~\ref{prop:vanish} shows that
compactness of $\tau_d$ and of $\tau_{d^{-1}}$ does not force
$q$-plasticity. In the other direction, plasticity does not pass to the
symmetrization, even for compact quasi-metric spaces
(Example~\ref{ex:sym}).

The route of Theorem~\ref{thm:lower} was proposed in
\cite[Section~7]{Duzer2026}, for its Question~7.1, whether some
notion of compactness forces rigidity: the backward returns of a
bijection form a left K-Cauchy sequence, and completeness would
supply a limit. Theorem~\ref{thm:upper} runs the same returns
without bijectivity, through upper limits. The corollaries below
answer the question: for
quasi-metric spaces, compactness of either topology forces plasticity, and
hereditary precompactness with sequential compactness of the
conjugate topology forces every non-contractive self-map to be an
isometry.

\begin{corollary}[Compact quasi-metric spaces are plastic]
\label{cor:compact}
Let $(X, d)$ be a quasi-metric space such that $\tau_d$ or
$\tau_{d^{-1}}$ is countably compact, in particular compact. Then
$X$ is expand-contract plastic. In particular every hereditarily
precompact, left K-sequentially complete quasi-metric space is
plastic.
\end{corollary}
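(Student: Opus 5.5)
The corollary should follow by reducing to Theorem~\ref{thm:lower} through Lemma~\ref{lem:compact}. Let $(X,d)$ be a quasi-metric space. It is $T_1$, so $d(x,y)=0$ forces $x=y$ and hence $d(y,x)=0$; thus hypothesis (iii), the $R_0$ axiom, holds. Suppose first that $\tau_d$ is countably compact. Lemma~\ref{lem:compact}(a) makes it sequentially compact, and compactness implies countable compactness by the same lemma. Theorem~\ref{thm:lower} then shows that every bijective non-contractive self-map of $X$ is an isometry. By the discussion in Section~\ref{sec:prelim}, this is equivalent to every non-expansive bijection being an isometry, so $X$ is expand-contract plastic.

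If instead $\tau_{d^{-1}}$ is countably compact, the plan is to apply Lemma~\ref{lem:compact}(a) to the quasi-pseudo-metric space $(X,d^{-1})$, whose topology is $\tau_{d^{-1}}$. This gives sequential compactness of $\tau_{d^{-1}}$. The last clause of Theorem~\ref{thm:lower} then gives the same conclusion, using again that (iii) holds.

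For the final sentence, let $X$ be hereditarily precompact and left K-sequentially complete. By \cite[Theorem~3]{KMRV1993}, hereditary precompactness means every sequence has a left K-Cauchy subsequence, which is hypothesis (i). The converse in Lemma~\ref{lem:compact}(b) then makes $\tau_d$ sequentially compact, hence countably compact by Lemma~\ref{lem:compact}(a), and the first part applies.

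No step is a real obstacle. The only points to check are that countable compactness is passed through Lemma~\ref{lem:compact}(a) rather than assumed to be sequential compactness, and that Lemma~\ref{lem:compact}(a) is applied to $d^{-1}$ in the conjugate case.
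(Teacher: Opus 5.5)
Your proposal is correct and follows the paper's proof essentially step for step. You get (iii) from $T_1$, use Lemma~\ref{lem:compact}(a) to pass from countable to sequential compactness (applied to $d^{-1}$ in the conjugate case), apply Theorem~\ref{thm:lower}, and handle the last sentence with \cite[Theorem~3]{KMRV1993} and the converse of Lemma~\ref{lem:compact}(b). Your detour back through countable compactness in the final step is harmless but unnecessary, since once $\tau_d$ is sequentially compact Theorem~\ref{thm:lower} applies directly.
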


\begin{proof}
A countably compact topology of a quasi-pseudo-metric is
sequentially compact by Lemma~\ref{lem:compact}(a), and (iii)
follows from the $T_1$ axiom; apply Theorem~\ref{thm:lower}. For the
last statement, hereditary precompactness is hypothesis (i)
\cite[Theorem~3]{KMRV1993}, and Lemma~\ref{lem:compact}(b) makes
$\tau_d$ sequentially compact.
\end{proof}

\begin{corollary}[Yoneda completeness forces isometries]
\label{cor:yoneda}
Let $(X, d)$ be a hereditarily precompact quasi-metric space in
which every left K-Cauchy sequence has a Yoneda limit, or whose
conjugate topology is sequentially compact. Then every
non-contractive self-map of $X$, bijective or not, is an isometry.
\end{corollary}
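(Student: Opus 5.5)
The plan is to reduce Corollary~\ref{cor:yoneda} to Theorem~\ref{thm:upper} by checking hypotheses (i), (ii) and (iii) for $(X,d)$.

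Hypothesis (iii) holds because a quasi-metric is $T_1$: if $d(x,y)=0$ then $x=y$, so $d(y,x)=0$. Hypothesis (i) is hereditary precompactness itself. By \cite[Theorem~3]{KMRV1993}, as recalled in the preliminaries, a quasi-metric space is hereditarily precompact exactly when every sequence has a left K-Cauchy subsequence.

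For (ii) I would treat the two alternative hypotheses separately.

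\begin{itemize}
\item Suppose every left K-Cauchy sequence has a Yoneda limit $y$. Then $y$ is an upper limit, as observed after Definition~\ref{def:limits}: taking $z=y$ in the defining identity gives $\lim_n d(x_n,y)=d(y,y)=0$.
\item Suppose instead that $\tau_{d^{-1}}$ is sequentially compact. Then Lemma~\ref{lem:compact}(c) gives that every left K-Cauchy sequence of $d$ has an upper limit. Concretely, a left K-Cauchy sequence has a subsequence converging in $\tau_{d^{-1}}$, and Lemma~\ref{lem:cluster}(b) passes that upper limit to the whole sequence.
\end{itemize}

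In either case (ii) holds, and Theorem~\ref{thm:upper} then shows that every non-contractive self-map, bijective or not, is an isometry.

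There is no real obstacle here, since everything is already packaged in the earlier lemmas. The only point needing care is the Yoneda case: one must note that only the upper-limit consequence of a Yoneda limit is used, and that sequential Yoneda completeness suffices. If the hypothesis were read for nets, sequences are in particular nets, so the sequential form still follows.
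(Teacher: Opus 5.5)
Your proposal is correct and follows the paper's proof step for step. Hypothesis (i) comes from the sequential characterization of hereditary precompactness \cite[Theorem~3]{KMRV1993}, hypothesis (ii) from Yoneda limits being upper limits or from Lemma~\ref{lem:compact}(c), and hypothesis (iii) from the $T_1$ axiom, after which Theorem~\ref{thm:upper} applies.
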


\begin{proof}
Hypothesis (i) of Theorem~\ref{thm:upper} is the sequential
characterization of hereditary precompactness
\cite[Theorem~3]{KMRV1993}; (ii) holds in the first case because Yoneda limits are
upper limits, in the second by Lemma~\ref{lem:compact}(c); (iii)
follows from the $T_1$ axiom.
\end{proof}

\begin{corollary}[Compact symmetrization]\label{cor:sym}
Let $(X, d)$ be a quasi-pseudo-metric space satisfying (iii) whose
topologies $\tau_d$ and $\tau_{d^{-1}}$ are both sequentially
compact; equivalently, whose symmetrization $d_s$ is compact. Then
every non-contractive self-map of $X$ is an isometry.
\end{corollary}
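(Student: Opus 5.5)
The plan is to reduce Corollary~\ref{cor:sym} to Theorem~\ref{thm:upper} by checking hypotheses (i) and (ii) from the two compactness assumptions; (iii) is assumed. The stated equivalence with compactness of $d_s$ is Lemma~\ref{lem:compact}(d), which uses exactly (iii). So it suffices to work under the hypothesis that $\tau_d$ and $\tau_{d^{-1}}$ are both sequentially compact.

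For (ii), I would invoke Lemma~\ref{lem:compact}(c): since $\tau_{d^{-1}}$ is sequentially compact, every left K-Cauchy sequence of $d$ has an upper limit. The first half of that lemma needs no precompactness, so this step is immediate.

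For (i), the cleanest route goes through $d_s$. Let $(x_n)$ be a sequence. By Lemma~\ref{lem:compact}(d), $d_s$ is compact, so some subsequence converges in $d_s$ to a point $p$. A $d_s$-convergent subsequence is $d_s$-Cauchy, and $d \le d_s$, so it is left K-Cauchy for $d$. Alternatively, one can argue directly. Take a subsequence with lower limit $p$, and a further subsequence with an upper limit $y$. Then $d(p,y)=0$, so $d(y,p)=0$ by (iii), and the subsequence satisfies $d(x_n,p)\to 0$ and $d(p,x_m)\to 0$. Hence $d(x_n,x_m)\le d(x_n,p)+d(p,x_m)\to 0$, which is left K-Cauchy, indeed Cauchy in both directions. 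This is exactly the computation in the proof of Lemma~\ref{lem:compact}(d).

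With (i), (ii) and (iii) established, Theorem~\ref{thm:upper} applies and gives that every non-contractive self-map of $X$ is an isometry. I expect no real obstacle. The only point needing care is that (i) is obtained from two-sided compactness together with (iii), not from hereditary precompactness, since $d$ is only a quasi-pseudo-metric here and the characterization of \cite{KMRV1993} is not needed.
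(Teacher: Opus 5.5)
Your proposal is correct and follows the paper's proof. The equivalence comes from Lemma~\ref{lem:compact}(d); hypothesis (i) holds because a $d_s$-convergent subsequence is left K-Cauchy, as $d \le d_s$; hypothesis (ii) comes from Lemma~\ref{lem:compact}(c); then Theorem~\ref{thm:upper} applies, and your alternative direct argument for (i) is also sound, being the computation inside the proof of Lemma~\ref{lem:compact}(d).
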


\begin{proof}
The equivalence is Lemma~\ref{lem:compact}(d). A $d_s$-convergent
subsequence is left K-Cauchy, so (i) holds; (ii) holds by
Lemma~\ref{lem:compact}(c); apply Theorem~\ref{thm:upper}.
\end{proof}

For $T_0$ spaces Corollary~\ref{cor:sym} is the classical statement
for the metric $d_s$, recalled in Proposition~\ref{prop:smyth}(c).
Without (iii) the equivalence fails: the space of
Proposition~\ref{prop:vanish} is compact in both topologies, and
its symmetrization is not compact.

\begin{example}[A left K-Cauchy sequence without limits in
Zavarzina's space]\label{ex:zav}
In the space of \cite[Theorem~2]{Zav2023}, $X = \{e^{in} : n \ge
1\}$ with $d(e^{in}, e^{im}) = |e^{in} - e^{im}|$ when $m \ge n$ and
$m \equiv n \pmod 2$, $d(e^{i}, e^{im}) = 2$ for even $m$, and $3$
otherwise. The shift $e^{in} \mapsto e^{i(n+2)}$ is a
non-contractive non-isometry; Theorem~\ref{thm:upper} therefore
predicts a left
K-Cauchy sequence without an upper limit, and indeed: choose even
indices $n_1 < n_2 < \cdots$ with $e^{in_j}$ convergent in the
circle to a point outside $X$. The sequence is left K-Cauchy, but
$d(e^{in_j}, e^{ik}) < 1$ requires $k \ge n_j$ and $k \equiv n_j
\pmod 2$, which fails for every fixed $k$ once $n_j > k$. It has no
lower limit either, since $d(e^{ik}, e^{in_j}) \ge |e^{ik} -
e^{in_j}|$ stays away from $0$; but the shift is not surjective, and
Theorem~\ref{thm:lower} predicts nothing here.
\end{example}

\begin{remark}[The two sides]\label{rem:side}
For bijections the two sides of convergence do not differ: under
(iii), sequential compactness of either topology forces plasticity.
For arbitrary non-contractive maps neither side suffices alone
(Proposition~\ref{prop:shift}), and Theorem~\ref{thm:upper} pairs
compactness on one side with precompactness on the other; applied to
$d^{-1}$, it equally pairs sequential compactness of $\tau_d$ with
hypothesis (i) for $d^{-1}$. Without (iii) compactness does not
force plasticity: the $T_0$ space of Proposition~\ref{prop:vanish}
is compact on both sides, and both theorems fail in it.
\end{remark}

\section{Smyth completeness}\label{sec:smyth}

Under hereditary precompactness, Smyth completeness reduces to
compactness of the symmetrization (the two-sided case of
Corollary~\ref{cor:sym}), and so falls under Zavarzina's
two-sided theorem. Each
of the one-sided hypotheses of Section~\ref{sec:main} is strictly
weaker.

\begin{theorem}[Zavarzina's two-sided theorem {\cite[Theorem~3]{Zav2023}}]
\label{thm:zav}
Let $(X, d)$ be a quasi-metric space such that $X$ and $(X,
d^{-1})$ are hereditarily precompact. Then every non-contractive
self-map of $X$ is an isometry.
\end{theorem}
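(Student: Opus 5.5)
The plan is to deduce the statement from Theorem~\ref{thm:upper}, applied to $(X, d)$. Hypothesis (iii) follows from the $T_1$ axiom. Hypothesis (i) is the sequential characterization of hereditary precompactness of $X$ \cite[Theorem~3]{KMRV1993}. So everything comes down to hypothesis (ii): every left K-Cauchy sequence $(x_n)$ of $d$ must have an upper limit. This has to be extracted from the hereditary precompactness of $(X, d^{-1})$.

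For (ii), take a left K-Cauchy sequence $(x_n)$ of $d$. By \cite[Theorem~3]{KMRV1993} applied to the quasi-metric $d^{-1}$, it has a subsequence that is left K-Cauchy for $d^{-1}$, that is, $d(x_{n_l}, x_{n_j}) < \varepsilon$ for $l \ge j \ge N$. Since $(x_n)$ is left K-Cauchy for $d$, that subsequence is Cauchy in both directions, hence Cauchy for $d_s$. This alone does not produce a limit in $X$, because no completeness is assumed. So I would not insist on (ii) literally. Instead I would rerun the proof of Theorem~\ref{thm:upper}, where (ii) is used only once: to obtain the backward return $d(F^{m_{i_k}}a, a) \to 0$ from the forward return $d(a, F^{m_{i_k}}a) \to 0$.

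That backward return I would obtain directly, by the symmetric version of Lemma~\ref{lem:returns}. A map that is non-contractive for $d$ is also non-contractive for $d^{-1}$. Hypothesis (i) for $d^{-1}$, applied twice, gives indices along which $(F^{n}a)$ and $(F^{n}b)$ are left K-Cauchy for $d^{-1}$. The argument of Lemma~\ref{lem:returns} then gives $d(F^m a, a) < 1/i$ and $d(F^m b, b) < 1/i$. The main obstacle is that both directions of return must hold \emph{for the same exponent $m$}, not merely along separate sequences.

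To get a common exponent, I would first apply (i) for $d$ four times and then (i) for $d^{-1}$, all to the pair of orbits of $a$ and $b$. The result is a single increasing sequence $(n_j)$ along which $(F^{n_j}a)$ and $(F^{n_j}b)$ are left K-Cauchy for both $d$ and $d^{-1}$, i.e.\ $d_s$-Cauchy. Set $m = n_{j+1} - n_j$ and use non-contractivity of $F^{n_j}$ for both $d$ and $d^{-1}$. This gives
\[
d_s(a, F^m a) \le d_s(F^{n_j}a, F^{n_{j+1}}a) \to 0,
\]
and likewise for $b$, with the same $m$. The conclusion of Theorem~\ref{thm:upper} then applies verbatim:
\[
d(a,b) \le d(Fa,Fb) \le d(F^m a, F^m b) \le d(F^m a, a) + d(a,b) + d(b, F^m b),
\]
and the outer terms tend to $0$. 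So $d(Fa, Fb) = d(a, b)$. In fact (iii) is then not even needed.
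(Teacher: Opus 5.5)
Your proof is correct. You were right not to force hypothesis (ii): $(0,1]$ with the usual metric satisfies both hereditary precompactness conditions, yet the Cauchy sequence $(1/n)$ has no upper limit there, so Theorem~\ref{thm:upper} does not apply as stated. Your count of extractions is garbled: you need two for $d$ and two for $d^{-1}$, one per orbit, but this is harmless.

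The paper does not reprove this theorem; it cites it. After Theorem~\ref{thm:upper} and in Proposition~\ref{prop:smyth}(c) it describes Zavarzina's argument as a reduction to the symmetrization:
\begin{itemize}
\item every sequence has a subsequence that is left K-Cauchy for $d$ and for $d^{-1}$, hence Cauchy for the metric $d_s$, so $d_s$ is totally bounded;
\item $F$ is non-contractive for $d_s$, so the classical theorem of Freudenthal--Hurewicz and Naimpally--Piotrowski--Wingler makes it a $d_s$-isometry;
\item since $d(Fx,Fy) \ge d(x,y)$ and $d(Fy,Fx) \ge d(y,x)$ have equal sums, both are equalities.
\end{itemize}

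You use the same input, $d_s$-Cauchy subsequences, but only along the two orbits. You then inline the recurrence instead of calling the metric theorem and splitting the sum afterwards. This makes your proof self-contained. It also needs no separation at all, not even the $T_0$ axiom that makes $d_s$ a metric, so it runs for quasi-pseudo-metrics once both hypotheses are read sequentially. And it shows exactly how precompactness of the conjugate supplies the backward return that (ii) and (iii) supply in Theorem~\ref{thm:upper}.

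The reduction to $d_s$ is shorter once the classical theorem is granted. It also makes visible the paper's point that the two-sided case is the metric theorem in disguise.
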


\begin{samepage}
\begin{proposition}[Smyth completeness is compactness of the
symmetrization]\label{prop:smyth}
Let $(X, d)$ be $T_0$ and such that every sequence has a left
K-Cauchy subsequence. Then:
\begin{enumerate}
\item[(a)] $X$ is sequentially Smyth complete if and only if $(X,
d_s)$ is compact;
\item[(b)] in that case $(X, d^{-1})$ is hereditarily precompact, so,
when $X$ is a quasi-metric space, it satisfies the hypotheses of
Theorem~\ref{thm:zav};
\item[(c)] in that case every non-contractive self-map of $X$ is an
isometry.
\end{enumerate}
\end{proposition}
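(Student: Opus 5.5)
Part (a) is the heart of the statement; (b) and (c) will follow from it together with results already proved. Since $d$ is $T_0$, $d_s$ is a metric, so compactness of $(X,d_s)$ is the same as sequential compactness.

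For the forward implication of (a), let $(x_n)$ be any sequence. By hypothesis it has a left K-Cauchy subsequence, and sequential Smyth completeness makes that subsequence converge in $d_s$. So $(X,d_s)$ is sequentially compact, hence compact.

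For the converse, assume $(X,d_s)$ is compact and let $(x_n)$ be left K-Cauchy. Some subsequence converges in $d_s$ to a point $p$. Then $d(p,x_{n_k})\to 0$, so Lemma~\ref{lem:cluster}(a) makes $p$ a lower limit of the whole sequence. Likewise $d(x_{n_k},p)\to 0$, so Lemma~\ref{lem:cluster}(b) makes $p$ an upper limit of the whole sequence. Together these give $d_s(x_n,p)\to 0$, so $X$ is sequentially Smyth complete. Neither direction needs $T_0$ beyond ensuring that $d_s$ is a metric.

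For (b), every sequence has a $d_s$-convergent subsequence. A $d_s$-Cauchy sequence is left K-Cauchy for both $d$ and $d^{-1}$, since $d\le d_s$ and $d^{-1}\le d_s$. So every sequence has a left K-Cauchy subsequence for $d^{-1}$. When $X$ is $T_1$, so is $d^{-1}$, and the sequential characterization \cite[Theorem~3]{KMRV1993} turns this into hereditary precompactness of $(X,d^{-1})$. Hence Theorem~\ref{thm:zav} applies.

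For (c), a $T_0$ space need not satisfy (iii), so Corollary~\ref{cor:sym} is not directly available. Instead I will rerun the classical recurrence argument in the compact metric space $(X,d_s)$. A non-contractive $F$ for $d$ satisfies $d_s(Fx,Fy)=d(Fx,Fy)+d(Fy,Fx)\ge d_s(x,y)$, so it is non-contractive for $d_s$. Rather than invoking the metric theorem for $d_s$, which would only say $F$ is a $d_s$-isometry and not that each direction is preserved, I will argue as in Lemma~\ref{lem:returns}.

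1. Fix $a,b\in X$. Pass to a subsequence $n_j$ along which $F^{n_j}a$ and $F^{n_j}b$ are $d_s$-Cauchy.
2. Set $m=n_{j+1}-n_j$. Non-contractivity of $F^{n_j}$ gives $d_s(a,F^m a)\le d_s(F^{n_j}a,F^{n_{j+1}}a)$, which is small, and similarly for $b$. So we get returns in both directions at once.
3. Conclude with the chain in Theorem~\ref{thm:upper}:
\[
d(a,b)\le d(Fa,Fb)\le d(F^ma,F^mb)\le d(F^ma,a)+d(a,b)+d(b,F^mb).
\]

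Step 3 needs no separation axiom at all. The only subtle point is noticing that (iii) is not needed here, because $d_s$-convergence provides both return directions simultaneously.
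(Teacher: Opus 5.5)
\textbf{Part (a).} Your argument is the paper's; the paper simply writes out the two halves of Lemma~\ref{lem:cluster} inline.

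\textbf{Part (b) has a gap.} The statement asserts that $(X,d^{-1})$ is hereditarily precompact for every $T_0$ space satisfying the hypotheses. Only the final clause, about the hypotheses of Theorem~\ref{thm:zav}, is restricted to quasi-metric spaces. You prove only that every sequence has a left K-Cauchy subsequence for $d^{-1}$. You then convert this into hereditary precompactness by citing \cite[Theorem~3]{KMRV1993} ``when $X$ is $T_1$''. For a $T_0$ space that is not $T_1$, the claim is therefore left unproved.

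The repair is short, because the direction you need holds for every quasi-pseudo-metric. Suppose some $A \subseteq X$ were not precompact for $d^{-1}$ at scale $\varepsilon$. Then you could choose $a_1, a_2, \dots \in A$ with $a_n \notin B_{d^{-1}}(a_i, \varepsilon)$ for $i < n$, that is, $d^{-1}(a_i, a_n) \ge \varepsilon$. No subsequence of this sequence would be left K-Cauchy for $d^{-1}$, a contradiction. The paper argues directly instead. Compactness of $(X,d_s)$ gives a finite $d_s$-$\varepsilon$-net $Y \subseteq A$. Since $d^{-1}(y,x) = d(x,y) \le d_s(x,y)$, the forward $d^{-1}$-balls of radius $\varepsilon$ at the points of $Y$ cover $A$, with no separation axiom involved.

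\textbf{Part (c) is correct but takes a different route.} The paper notes that $F$ is non-contractive for $d_s$, cites \cite{FH1936} to make $F$ a $d_s$-isometry, and then splits the sum. Your stated reason for avoiding this is mistaken. A $d_s$-isometry that is non-contractive for $d$ does preserve each direction: the inequalities $d(Fx,Fy) \ge d(x,y)$ and $d(Fy,Fx) \ge d(y,x)$, together with $d_s(Fx,Fy) = d_s(x,y)$, force both to be equalities. That is exactly the paper's one-line conclusion.

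Your alternative is still valid. You rerun the recurrence of Lemma~\ref{lem:returns} with $d_s$-Cauchy subsequences, so both return directions come at once, and then apply the chain from Theorem~\ref{thm:upper}. It is self-contained, avoids the classical theorem, and shows that only total boundedness of $d_s$ is used. The paper's version is a two-line reduction once the metric theorem is granted.
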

\end{samepage}

\begin{proof}
$T_0$ makes $d_s$ a metric.
(a) If $X$ is sequentially Smyth complete, every sequence has a
left K-Cauchy subsequence, which $d_s$-converges; so $(X, d_s)$ is
sequentially compact, hence compact. Conversely, let $(X, d_s)$ be
compact and $(x_n)$ left K-Cauchy. Some subsequence $x_{n_k}$
$d_s$-converges to a point $x$. Given $\varepsilon > 0$, choose $N$
with $d(x_n, x_m) < \varepsilon$ for $m \ge n \ge N$, and $K$ with
$d_s(x_{n_k}, x) < \varepsilon$ for $k \ge K$ and $n_K \ge N$. For $n
\ge n_K$ pick $k \ge K$ with $n_k \ge n$ and $k' \ge K$ with
$n_{k'} \le n$. Then $d(x_n, x) \le d(x_n, x_{n_k}) + d(x_{n_k}, x) <
2\varepsilon$ and $d(x, x_n) \le d(x, x_{n_{k'}}) + d(x_{n_{k'}},
x_n) < 2\varepsilon$, so $x_n \to x$ in $d_s$.
(b) Let $A \subseteq X$ and $\varepsilon > 0$. As a subset of a
compact metric space, $A$ has a finite $\varepsilon$-net $Y
\subseteq A$ for $d_s$. Since $d^{-1}(y, x) = d(x, y) \le d_s(x,
y)$, the forward $d^{-1}$-balls of radius $\varepsilon$ at the
points of $Y$ cover $A$, so $(A, d^{-1})$ is precompact.
(c) A self-map $F$ non-contractive for $d$ is non-contractive for
$d_s$. A non-contractive self-map of a compact metric space is an
isometry \cite{FH1936}, so $F$ is a $d_s$-isometry, and the equality
$d(Fx,Fy) + d(Fy,Fx) = d(x,y) + d(y,x)$, each summand dominating its
partner, forces $d(Fx, Fy) = d(x, y)$. For quasi-metric spaces the
statement also follows from (b) and Theorem~\ref{thm:zav}; the
argument just given is Zavarzina's, and uses separation only to make
$d_s$ a metric.
\end{proof}

Smyth completeness implies the lower-limit hypothesis in the
last statement of Corollary~\ref{cor:compact} and the
Yoneda-limit hypothesis of
Corollary~\ref{cor:yoneda}: if $x_n \to x$ in $d_s$, then $x$ is a lower
limit, and $|d(x_n, z) - d(x, z)| \le d_s(x_n, x) \to 0$ for every
$z$, so $x$ is a Yoneda limit. Under hereditary precompactness
it also recovers the two-sided hypothesis. Zavarzina's own one-sided
corollaries \cite[Corollaries~1 and~2]{Zav2023}, a uniform symmetry
condition and a bound $d(x,y) \le C\,d(y,x)$, recover it as well.
Neither one-sided hypothesis does, and neither
Theorem~\ref{thm:upper} nor Theorem~\ref{thm:lower} is a
consequence of Proposition~\ref{prop:smyth} or of
Theorem~\ref{thm:zav}. The first two examples
are on the upper side, the third on the lower. A fourth shows that
plasticity itself does not pass to the symmetrization.

\begin{example}[Sequentially Yoneda complete, symmetrization not compact]
\label{ex:chain}
Let $X = \{x_1, x_2, \dots\} \cup \{y\}$ with
\[
d(x_n, x_m) = 2^{-n} - 2^{-m} \ (m \ge n), \quad d(x_m, x_n) = 1 \
(m > n), \quad d(x_n, y) = 2^{-n}, \quad d(y, x_n) = 1 .
\]
This is a quasi-metric: $T_1$ is clear, and the triangle inequality
holds because every route through a lower index or through $y$ in
the backward direction costs $1$, while forward routes add up
exactly. Every sequence has a left K-Cauchy subsequence (a constant
one, or one with strictly increasing indices), so $X$ is
hereditarily precompact. A left K-Cauchy sequence is eventually
constant, or its indices are eventually nondecreasing and unbounded
(a backward step costs $1$, and $y$ can be followed only by $y$); in
the second case $y$ is its Yoneda limit: $\lim_n d(x_n, x_m) = 1 = d(y,
x_m)$ and $\lim_n d(x_n, y) = 0 = d(y, y)$. Yet the points $x_n$ are
pairwise at $d_s$-distance at least $1$, so $(X, d_s)$ is not
compact, and by Proposition~\ref{prop:smyth}(a) $X$ is not
sequentially Smyth complete. More is true: the hypotheses of
Theorem~\ref{thm:zav} themselves fail, not merely this
sufficient condition for them: $d(x_m, x_n) = 1$ at $m > n$
means $(x_n)$ has no left K-Cauchy subsequence for $d^{-1}$,
and the conjugate is not hereditarily precompact.
\end{example}

\begin{example}[A non-trivial isometry in the new class]
\label{ex:product}
Let $Z = X \times S^1$ with $X$ the space of
Example~\ref{ex:chain}, $S^1$ the circle with its arc metric
$\varrho$ and an irrational rotation $r$, and
$d_Z((a,b),(a',b')) = \max(d(a,a'), \varrho(b,b'))$. This is a
quasi-metric space (separation and the triangle inequality hold
coordinatewise), hereditarily precompact (refine a sequence to
a left K-Cauchy subsequence in the first coordinate, then to a
convergent one in the compact second), and sequentially Yoneda
complete: a left K-Cauchy sequence is coordinatewise left
K-Cauchy, its first coordinate is eventually constant or has a
Yoneda limit by Example~\ref{ex:chain}, its second converges,
and since both distance sequences converge pointwise, their
maximum converges to the maximum: the pair of limits is a
Yoneda limit in $Z$. The symmetrization is not compact (a fibre
$\{x_n\} \times \{b\}$ is $1$-separated), so $Z$ again lies
outside Proposition~\ref{prop:smyth}, and outside
Theorem~\ref{thm:zav} itself, for the reason of
Example~\ref{ex:chain}: on that fibre the conjugate distance
from any point to any later one is $1$, so the sequence
$((x_n, b))_n$ has no left K-Cauchy subsequence for
$d_Z^{-1}$, and the conjugate is not hereditarily precompact.
Finally, $\mathrm{id} \times r$ is a non-contractive self-map
that is an isometry and not the identity.
\end{example}

\begin{example}[Compact, conjugate not hereditarily precompact]
\label{ex:lowerchain}
Move the limit point of Example~\ref{ex:chain} below the chain: let
$X = \{x_1, x_2, \dots\} \cup \{y\}$ with
\[
d(x_n, x_m) = 2^{-n} - 2^{-m} \ (m \ge n), \quad d(x_m, x_n) = 1 \
(m > n), \quad d(y, x_n) = 2^{-n}, \quad d(x_n, y) = 1 .
\]
This is a quasi-metric: $T_1$ is clear, and every distance is at
most $1$, so the triangle inequality needs checking only along
routes made of two moves of cost below $1$, namely $x_n \to x_m
\to x_l$ with $n \le m \le l$, where the costs add up exactly, and $y
\to x_n \to x_m$ with $n \le m$, where $2^{-m} \le 2^{-n} + (2^{-n}
- 2^{-m})$. Every sequence has a left K-Cauchy subsequence (a
constant one, or one with strictly increasing indices), so $X$ is
hereditarily precompact. The topology $\tau_d$ is compact: an open set
containing $y$ contains a ball $B_d(y, \varepsilon)$, hence every
$x_n$ with $2^{-n} < \varepsilon$, that is, all but finitely many
points. So Corollary~\ref{cor:compact} applies, through
Theorem~\ref{thm:lower} on which it rests, and no other result
of Sections~\ref{sec:main} and~\ref{sec:smyth} does: the
sequence $(x_n)$ is left K-Cauchy and has no upper
limit, since $d(x_n, y) = 1$ and $d(x_n, x_k) = 1$ for $n > k$: the
space is not sequentially Yoneda complete, and hypothesis (ii) of
Theorem~\ref{thm:upper} fails. Moreover, $d(x_m, x_n) = 1$ at $m > n$
means that $(x_n)$ has no left K-Cauchy subsequence for $d^{-1}$:
the conjugate is not hereditarily precompact, so
Theorem~\ref{thm:zav} does not apply, and $(X, d_s)$ is not compact.
As in Example~\ref{ex:product}, the product $X \times S^1$ with the
maximum distance is again a hereditarily precompact quasi-metric
space; its topology is the product topology, hence compact; its
conjugate is not hereditarily precompact; and $\mathrm{id} \times
r$ is a bijective isometry of it that is not the identity.
\end{example}

\begin{example}[Compact and plastic, symmetrization not plastic]
\label{ex:sym}
Let $X = \Z \cup \{o\}$, let $a\colon \Z \to (1, 3/2)$ be strictly
decreasing, for instance $a(m) = 1 + 1/(2(1 + 2^m))$, put $w(n) =
2^{-|n|-1}$, and
\[
d(m, n) = a(\min(m, n)) \ (m \ne n), \quad d(o, n) = w(n), \quad
d(n, o) = 2 - w(\min(n, 0)) .
\]
This is a quasi-metric: $T_1$ is clear, and the distances out of
$o$ are at most $1/2$ while all other nonzero distances lie in
$(1, 2)$, so a two-move route out of $o$ ends with a move of cost
above $1$, more
than any $d(o, n)$; a route between two integers costs at least
$3/2$ through $o$ and more than $2$ otherwise, while $a < 3/2$; and
a two-move route from an integer to $o$ costs more than $2$. This
gives the triangle inequality. An open set containing $o$ contains a
ball $B_d(o, \varepsilon)$, hence every $n$ with $w(n) <
\varepsilon$, that is, all but finitely many points: $\tau_d$ is
compact, and by Corollary~\ref{cor:compact} every bijective
non-contractive self-map of $(X, d)$ is an isometry.

Now let $G$ fix $o$ and send $n$ to $n - 1$. For $m \ne n$ in $\Z$,
both $d(m, n)$ and $d(n, m)$ equal $a(\min(m, n))$, which $G$
strictly increases. For the pairs with $o$, $d(n, o) + d(o, n)$
equals $2$ for $n \le 0$
and $3/2 + w(n)$ for $n > 0$, while $\max(d(n, o), d(o, n)) = 2 -
w(\min(n, 0))$; neither increases with $n$. So $G$ is a
bijective non-contractive self-map, and not an isometry, of the
metric space $(X, d_s)$ and of $(X, \max(d, d^{-1}))$: neither
symmetrization of this compact, plastic quasi-metric space is
plastic. There is no conflict with plasticity of $d$, since $d(o, n)
= w(n)$ decreases from $n$ to $n - 1$ when $n \le 0$; and none with
Corollary~\ref{cor:sym}, since distinct integers are at distance
above $1$ from one another and to $o$, so $\tau_{d^{-1}}$ is not
compact and $X$ is not hereditarily precompact.

In the terms of \cite{Agyingi2026}, where the symmetrization is the
maximum, $(X, d)$ is $q$-plastic and its symmetrization is not
plastic. In the other direction,
\cite[Proposition~5.1]{Agyingi2026} gives a $T_0$-quasi-metric that
is not $q$-plastic and whose symmetrization is plastic. So neither
property implies the other, and for the implication from $d$ to its
symmetrization, compactness of $\tau_d$ with the $T_1$ axiom does
not help. This bears on \cite[Problem~5.4]{Agyingi2026}, which asks
for conditions under which the two properties are equivalent.
\end{example}

\section{Toll spirals and their completions}\label{sec:toll}

Let $(Y, \rho)$ be a metric space, $\sigma$ a bijective isometry of
$Y$, $y_0 \in Y$, and $c > 0$; write $\varphi(n) = \sigma^n y_0$.
The \emph{toll spiral} \cite[Definition~3.1]{Duzer2026} is $\Z$ with
\[
d(n, m) = \rho(\varphi(n), \varphi(m)) + \tau(n, m), \qquad \tau(n,
m) = c \text{ if } m < n \text{ and } n \ge 0, \ \text{and } 0
\text{ otherwise}.
\]
The integers $n \ge 0$ are \emph{recent}, the others \emph{deep}:
the toll charges descents in index from recent sources. When
$\varphi$ is injective and $\varphi(\Z)$ is totally bounded, this is
a bounded, hereditarily precompact quasi-metric space, and the shift
$n \mapsto n + 1$ is a bijective non-contractive non-isometry
\cite[Theorem~3.10]{Duzer2026}. By Corollary~\ref{cor:compact}
neither of its topologies is countably compact, and by
Corollary~\ref{cor:yoneda} it is not sequentially Yoneda complete.
Proposition~\ref{prop:host} says more: the obstruction persists in
any host, on both sides.

\begin{remark}[Self-contained checks]\label{rem:selfcheck}
The cited facts are immediate from the definition. The toll is
subadditive, so $d$ satisfies the triangle inequality: if $\tau(n,
l) = c$, that is, $l < n$ and $n \ge 0$, then either $m < n$ and
$\tau(n, m) = c$, or $m \ge n \ge 0$ and $l < m$, so $\tau(m, l) =
c$. Injectivity of $\varphi$ gives the $T_1$ axiom. The
shift is non-contractive: positions are $\sigma$-shifted, so
the $\rho$-part is unchanged, and $\tau(n+1, m+1) \ge \tau(n,
m)$, since $m+1 < n+1$ iff $m < n$ while $n+1 \ge 0$ is weaker
than $n \ge 0$. It is not an isometry: $d(0, -1) = d(-1, -2) +
c$: the $\rho$-parts agree, $\sigma$ being an isometry, and the
source $-1$ becomes recent under the shift. Hereditary
precompactness: a sequence of indices with some value repeated
infinitely often has a constant subsequence; otherwise it has a
strictly monotone one, increasing (every step an ascent) or
decreasing with all terms eventually deep, and in both cases
toll-free, so $d(n_i, n_k) = \rho(\varphi(n_i), \varphi(n_k))$
there; refining once more along a $\rho$-Cauchy subsequence of
the totally bounded orbit gives a left K-Cauchy subsequence.
\end{remark}

\begin{samepage}
\begin{proposition}[No $R_0$ host with limits]\label{prop:host}
Let $\varphi(\Z)$ be totally bounded, and let $(X, d)$ be an $R_0$
quasi-pseudo-metric space containing the toll spiral as a subspace.
Then some left K-Cauchy sequence of $X$ has no upper limit, and some
left K-Cauchy sequence of $X$ has no lower limit. In particular no
sequentially Yoneda complete quasi-metric space, and no countably
compact one, contains the toll spiral.
\end{proposition}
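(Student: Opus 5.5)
The plan is to produce both bad sequences inside the spiral itself, using only the geometry of $\varphi$ and the toll. Separation in the host is used once, as $R_0$, to turn a distance-zero relation into a toll-crossing distance that must tend to $0$ but is at least $c$. The host $X$ never needs a self-map: the shift is not defined on $X$, so Theorems~\ref{thm:lower} and~\ref{thm:upper} cannot be invoked directly, and only their return mechanism is reused.

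\emph{Step 1: returns of the orbit.} Since $\varphi(\Z)$ is totally bounded, the sequence $(\varphi(n))_{n \ge 0}$ has a $\rho$-Cauchy subsequence $\varphi(n_1), \varphi(n_2), \dots$. Because $\sigma$ is an isometry,
\[
\rho(\varphi(n_j), \varphi(n_{j+1})) = \rho(\varphi(0), \varphi(n_{j+1} - n_j)),
\]
exactly as in Lemma~\ref{lem:returns}. This gives arbitrarily large $m$ with $\rho(\varphi(0), \varphi(m))$ arbitrarily small. I choose strictly increasing $r_k \ge 1$ with $\rho(\varphi(0), \varphi(r_k)) < 1/k$. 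Applying $\sigma^{-r_k}$ gives $\rho(\varphi(0), \varphi(-r_k)) < 1/k$ as well.

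\emph{Step 2: no upper limit.} Take the ascending sequence $x_k = r_k$. Ascents are toll-free, so for $k < l$
\[
d(r_k, r_l) = \rho(\varphi(r_k), \varphi(r_l)) \le 1/k + 1/l .
\]
Hence $(r_k)$ is left K-Cauchy. Also $d(0, r_k) = \rho(\varphi(0), \varphi(r_k)) \to 0$. Suppose $y \in X$ is an upper limit. Then $d(0, y) \le d(0, r_k) + d(r_k, y) \to 0$, so $d(0, y) = 0$, and $R_0$ gives $d(y, 0) = 0$. Therefore $d(r_k, 0) \le d(r_k, y) + d(y, 0) \to 0$. But $r_k \ge 1$ is recent and $0 < r_k$, so $d(r_k, 0) \ge c$, a contradiction.

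\emph{Step 3: no lower limit.} Take the descending deep sequence $x_k = -r_k$. For $k < l$ the step from $-r_k$ to $-r_l$ is a descent from a deep source, hence toll-free. So the sequence is again left K-Cauchy, and $d(-r_k, 0) = \rho(\varphi(-r_k), \varphi(0)) \to 0$, since this is an ascent. Suppose $p$ is a lower limit. Then $d(p, 0) \le d(p, -r_k) + d(-r_k, 0) \to 0$, so $d(0, p) = 0$ by $R_0$. Therefore $d(0, -r_k) \le d(0, p) + d(p, -r_k) \to 0$. But $0$ is recent and $-r_k < 0$, so the toll gives $d(0, -r_k) \ge c$, a contradiction.

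\emph{Step 4: the final sentence.} A quasi-metric is $T_1$, hence $R_0$. Every Yoneda limit is an upper limit, so Step~2 excludes sequentially Yoneda complete hosts. If $\tau_d$ is countably compact, it is sequentially compact by Lemma~\ref{lem:compact}(a), hence left K-sequentially complete by Lemma~\ref{lem:compact}(b); Step~3 excludes this. The same argument applied to $d^{-1}$ via Lemma~\ref{lem:compact}(c) and Step~2 handles countable compactness of the conjugate topology, if that reading is intended.

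The main obstacle is Step~1: arranging returns to $\varphi(0)$ in both directions of the index at once, with the indices large enough to force the toll. Once the isometry $\sigma$ transports the Cauchy gap $n_{j+1} - n_j$ to a return from $0$, the remaining steps are two-line triangle-inequality arguments.
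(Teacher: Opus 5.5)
Your proof is correct and follows the paper's argument. It uses recurrence of $\sigma$ on the totally bounded orbit, an ascending sequence of recent returns to $\varphi(0)$ that admits no upper limit, and a descending sequence of deep returns that admits no lower limit, with each contradiction coming from $R_0$ against the toll $c$; obtaining the deep returns as $-r_k$ by applying $\sigma^{-r_k}$ is a tidy shortcut for the paper's ``likewise for $\sigma^{-1}$''.

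One small repair in Step~1: the consecutive gaps $n_{j+1}-n_j$ of a Cauchy subsequence need not grow. Take $n_l-n_j$ with $l$ large instead, since the Cauchy property bounds $\rho(\varphi(n_j),\varphi(n_l))$ for all $l>j\ge J$; the paper does this, and it gives arbitrarily large return times.
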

\end{samepage}

\begin{proof}
The isometry $\sigma$ is recurrent on the totally bounded orbit: a
$\rho$-Cauchy subsequence $\sigma^{m_j}y_0$ gives
$\rho(\sigma^{m_l - m_j}y_0, y_0) = \rho(\sigma^{m_l}y_0,
\sigma^{m_j}y_0)$, small for $l > j$ large, with $m_l - m_j$ as
large as we like; and likewise for $\sigma^{-1}$.

\emph{Upper side.} There are recent integers $n_1 < n_2 < \cdots$
with $\rho(\varphi(n_i), \varphi(0)) \to 0$. They are ascents, hence
toll-free, and $d(n_i, n_k) \le \rho(\varphi(n_i), \varphi(0)) +
\rho(\varphi(0), \varphi(n_k)) \to 0$, so the sequence is left
K-Cauchy. Suppose it has an upper limit $y$: $d(n_i, y) \to 0$.
Then $d(0, y) \le \rho(\varphi(0), \varphi(n_i)) + d(n_i, y) \to 0$
(ascents from $0$ are toll-free), so $d(0, y) = 0$, and $d(y, 0) =
0$ by $R_0$. Hence $d(n_i, 0) \le d(n_i, y) + d(y, 0) \to 0$. But
every $d(n_i, 0) \ge c$, a descent to $0$ from a recent source.

\emph{Lower side.} There are deep integers $n_1 > n_2 > \cdots$ with
$\rho(\varphi(n_k), \varphi(0)) \to 0$. Their sources are deep,
hence toll-free, so $d(n_j, n_l) = \rho(\varphi(n_j), \varphi(n_l))
\to 0$ and the sequence is left K-Cauchy. Suppose it has a lower
limit $p$: $d(p, n_k) \to 0$. Then $d(p, 0) \le d(p, n_k) +
\rho(\varphi(n_k), \varphi(0)) \to 0$ (the ascent from $n_k$ to $0$
is toll-free), so $d(p, 0) = 0$, and $d(0, p) = 0$ by $R_0$. Hence
$d(0, n_k) \le d(0, p) + d(p, n_k) \to 0$. But every $d(0, n_k) \ge
c$, a descent from the recent source $0$.

The last statement follows because Yoneda limits are upper limits,
and because countable compactness gives lower limits
(Lemma~\ref{lem:compact}).
\end{proof}

Corollary~\ref{cor:compact} excludes compact spaces that carry a
toll spiral together with a bijective non-contractive extension of
its shift. Proposition~\ref{prop:host} asks less of the host: it
need not carry any extension of the shift, and it need not have any
particular form (compare \cite[Proposition~7.4]{Duzer2026}).

The Yoneda completion of such a toll spiral \cite{BvBR1998} therefore
leaves the separated class, and it does so by acquiring one-way free
moves. Concretely, the completion can be described by left K-Cauchy
sequences with distance $\hat d((x_n), (y_m)) = \lim_n \lim_m d(x_n,
y_m)$, the points of $X$ being the constant sequences
\cite[Propositions~5.2 and~5.3]{BvBR1998}. By recurrence of
$\sigma^{-1}$ there are deep integers $n_1 > n_2 > \cdots$ with
$\varphi(n_j) \to \varphi(0)$; their sources are deep, hence
toll-free, so they form a left K-Cauchy sequence, and the point $\hat
y$ it defines satisfies $\hat d(\hat y, 0) = \lim_j d(n_j, 0) = 0$ and $\hat d(0,
\hat y) = \lim_j d(0, n_j) = c$, each $d(0, n_j)$ being a descent
from the recent source $0$.

\section{The hypotheses cannot be dropped}\label{sec:open}

This section shows that no hypothesis of
Theorems~\ref{thm:lower} and~\ref{thm:upper} can be dropped. We
begin with the $R_0$ axiom. The two theorems use hypothesis (iii)
only to turn a distance zero in one direction into a distance zero
in the other, and Proposition~\ref{prop:smyth} disposes of the
$T_0$ spaces whose symmetrization is compact, one-way free moves or
not. In the remaining case, a $T_0$ space
with one-way free moves and a non-compact symmetrization, both
conclusions can fail at once, for a single bijection. The proof of Proposition~\ref{prop:host}
indicates where a counterexample has to come from: with separation,
an upper limit of toll-free ascents is forced onto the orbit point
itself, where the toll contradicts the convergence; without
separation the limit can instead sit at distance $0$ above the orbit
point and at positive distance below it. The construction below
supplies such limits at both ends of a toll spiral, and replaces
the on/off toll by a fee that vanishes toward the deep end, so that
no two points are merged.

\begin{samepage}
\begin{proposition}[The $R_0$ axiom cannot be dropped]
\label{prop:vanish}
There is a $T_0$ quasi-pseudo-metric space $(X, d)$ such that
\begin{enumerate}
\item[(a)] every sequence in $X$ has a left K-Cauchy subsequence;
\item[(b)] every left K-Cauchy sequence in $X$ has a Yoneda limit
and a lower limit, and every left K-Cauchy net has a Yoneda limit;
\item[(c)] the topologies $\tau_d$ and $\tau_{d^{-1}}$ are both
compact;
\item[(d)] $(X, d_s)$ is not compact;
\end{enumerate}
together with a bijective non-contractive self-map of $X$ that is not
an isometry. Hence the conclusions of Theorems~\ref{thm:lower}
and~\ref{thm:upper} both fail without (iii), for one and the same
bijection of a $T_0$ space: without the $R_0$ axiom, hypothesis (i),
Yoneda completeness, and compactness of both topologies together do
not force a bijective non-contractive map to be an isometry.
\end{proposition}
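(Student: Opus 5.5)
We would realize the model described just before the statement: a toll spiral whose toll grows with the index from $0$ at the deep end toward a positive bound at the recent end, closed off by a bottom circle and a top circle. Concretely, take $Y = S^1$ with arc metric $\rho$, $\sigma$ an irrational rotation, $\varphi(n) = \sigma^n y_0$, and a strictly increasing $t\colon \Z \to (0, c)$ with $t(n) \to 0$ as $n \to -\infty$ and $t(n) \to c$ as $n \to +\infty$. Let $X = \Z \cup \{b_\theta\}_{\theta \in S^1} \cup \{u_\theta\}_{\theta \in S^1}$, and think of the bottom circle $b$ as sitting at index $-\infty$ and the top circle $u$ at index $+\infty$. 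Every distance is $\rho$ of the positions plus a fee charged for a descent in index: $t(n)$ from a source $n \in \Z$, $c$ from a top source, and $0$ from a bottom source. Explicitly,
\[
d(n,m) = \rho(\varphi(n),\varphi(m)) + [m<n]\,t(n), \qquad d(b_\theta, x) = \rho(\theta, \mathrm{pos}(x)),
\]
\[
d(n, b_\theta) = \rho(\varphi(n),\theta) + t(n), \quad d(n, u_\theta) = \rho(\varphi(n),\theta), \quad d(u_\theta, x) = \rho(\theta,\mathrm{pos}(x)) + c \ (x \notin u),
\]
and $d(u_\theta, u_{\theta'}) = \rho(\theta,\theta')$. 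The map $F$ is the shift $n \mapsto n+1$ on $\Z$ together with $b_\theta \mapsto b_{\sigma\theta}$ and $u_\theta \mapsto u_{\sigma\theta}$.

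\emph{Step 1: the space and the map.} The triangle inequality reduces to subadditivity of the fee, and we would check it as in Remark~\ref{rem:selfcheck}. The key case is $l<n$ with $m \ge n$: here the fee $t(m) \ge t(n)$ is paid at $m$, since $t$ is increasing and $c$ is its supremum. The ordering $-\infty < \Z < +\infty$ handles the mixed cases uniformly. For $T_0$, a distance $0$ forces equal positions, and the only zero distances between distinct points are $d(b_\theta, u_\theta) = 0$, whose reverse is $c$. This pair is precisely the one-way free move, so $R_0$ fails. $F$ is bijective and preserves every $\rho$-part, since $\sigma$ is an isometry; it raises $t(n)$ to $t(n+1)$ and keeps the fees $0$ and $c$. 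Hence $F$ is non-contractive, and it is not an isometry because $d(1,0) - d(0,-1) = t(1) - t(0) > 0$.

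\emph{Step 2: compactness and the symmetrization.} For (c), the point is that the forward ball $B_d(b_\theta, \varepsilon)$ contains every point whose position is within $\varepsilon$ of $\theta$. Finitely many such balls therefore cover $X$, and any open cover has a finite subcover through its members containing finitely many $b_\theta$. Dually, the $d^{-1}$-ball $\{x : d(x, u_\theta) < \varepsilon\}$ contains all points with nearby position, which gives compactness of $\tau_{d^{-1}}$. For (d), distinct recent integers $n, m \ge 0$ satisfy $d_s(n,m) \ge t(0) > 0$, so $d_s$ is not compact.

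\emph{Step 3: Cauchy sequences and limits.} This is the main obstacle. We would classify left K-Cauchy sequences: after the first descent the fee stays above a positive floor unless the indices go to $-\infty$. Up to finitely many terms, a left K-Cauchy sequence is therefore eventually constant, or eventually in one circle, or consists of integers $n_k \to +\infty$, or of integers $n_k \to -\infty$, each time with $\rho$-convergent positions $\to \theta$.
\begin{itemize}
\item For $n_k \to +\infty$, the point $u_\theta$ is a Yoneda limit: $d(n_k, z) \to \rho(\theta, \mathrm{pos}(z)) + c$ for $z$ below, and $\to \rho$ for $z$ in the top circle.
\item For $n_k \to -\infty$, the point $b_\theta$ is a Yoneda limit.
\item In every case $b_\theta$ is a lower limit.
\end{itemize}
Part (a) then follows from compactness of $S^1$ combined with a monotone-subsequence argument on indices. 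The net version of (b) needs the same classification for nets. The cleanest route we would try is a direct check: with $\theta = \lim$ of the positions and the fee behaviour tracked by $\limsup$ and $\liminf$ of indices, the corresponding $b_\theta$ or $u_\theta$ is a Yoneda limit. The risk is a net mixing integers with circle points, and we expect the bulk of the care to go there.
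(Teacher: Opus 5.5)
Your space and map are the paper's own: the three layers $B \sqcup \Z \sqcup A$ (your bottom and top circles), a toll charged at the source of a descent with heights $0$, $t(n)$, $c$, and $F$ acting as the shift together with the rotation. Your triangle inequality, non-contractivity and non-isometry of $F$, (c), (d), and the individual limit computations match the paper's proof. But two of your intermediate claims are false as stated, and the net part of (b) is not done.

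\emph{The $T_0$ check.} The zero distances between distinct points are not only $d(b_\theta, u_\theta)$. There are also $d(b_{\varphi(n)}, n) = 0$ and $d(n, u_{\varphi(n)}) = 0$ for every $n \in \Z$, with reverse distances $t(n)$ and $c$. So $T_0$ holds, but only because $t(n) > 0$ at every deep $n$. This is the reason the fee must vanish at $-\infty$ without being switched off. If $t = 0$ on the deep integers, your argument would run unchanged, yet each deep $n$ would be isomorphic to $b_{\varphi(n)}$ (Remark~\ref{rem:fee}).

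\emph{The classification in Step~3.} Your list misses sequences that interleave bottom-circle points with integers tending to $-\infty$. For example, alternate $b_{\varphi(n_k)}$ and $n_k$, with $n_k$ decreasing and $\varphi(n_k)$ convergent. This is left K-Cauchy, since every step costs $\rho$ plus at most $t(n_k) \to 0$. The conclusion survives, and the paper's way of splitting cases handles this and nets at once. Split by cofinality rather than by the shape of a tail. Write $h(x)$ for the fee $x$ charges on descents.
(1) The sequence is eventually constant.
(2) Cofinally many terms lie in the top circle. Every exit from it costs $c$, so once such a term appears in a tail with forward distances below $c$, the sequence stays there, and $u_\theta$ is the Yoneda limit.
(3) The sequence is eventually off the top circle, and for some $M$ cofinally many terms are integers $\ge M$. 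A descent from $n \ge M$ costs at least $t(M)$, so in a tail with forward distances below $t(M)$ the sequence is trapped in a nondecreasing run of integers. It is eventually constant if bounded, and otherwise $u_\theta$ is its Yoneda limit.
(4) Otherwise $h(x_k) \to 0$, because $t(n) \to 0$ as $n \to -\infty$. Since $0 \le d(x_k, z) - \rho(\mathrm{pos}(x_k), \mathrm{pos}(z)) \le h(x_k)$, the point $b_\theta$ is the Yoneda limit whatever the mix of layers.

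Nothing here uses that the index set is $\N$. Read ``infinitely many'' as ``cofinally many'' and note two facts: the positions of a left K-Cauchy net are $\rho$-Cauchy (compare two indices through a common later one), and a bounded nondecreasing net of integers is eventually constant. Then the same four cases give Yoneda limits for nets. The mixing you feared is case (4), and it is harmless because there only the size of $h$ matters, not the layer.
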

\end{samepage}

\begin{proof}
Fix an irrational $\theta$, the circle $T = \R/\Z$ with its
invariant metric $\rho$, a constant $c > 0$, and a \emph{fee} $g :
\Z \to (0, c]$ that is nondecreasing and not constant, with $g(n)
\to 0$ as $n \to -\infty$ and $g(n) \to c$ as $n \to +\infty$; for
instance $g(n) = c \min(1, 2^n)$. The space has three layers,
\[
X = B \sqcup \Z \sqcup A, \qquad B = \{\beta_y : y \in T\},
\quad A = \{\alpha_y : y \in T\},
\]
with positions $\pi(\beta_y) = \pi(\alpha_y) = y$ and $\pi(n) =
n\theta$. Say that $z$ lies \emph{strictly below} $x$ when $z \in
B$ and $x \notin B$, when $z, x \in \Z$ with $z < x$, or when $z
\notin A$ and $x \in A$: the layer order $B < \Z < A$, with $\Z$
carrying its index order. Let $h(\beta_y) = 0$, $h(n) = g(n)$,
$h(\alpha_y) = c$, and put
\[
d(x, x') = \rho(\pi x, \pi x') + \tau(x, x'), \qquad \tau(x, x') =
\begin{cases}
h(x) & \text{if } x' \text{ lies strictly below } x,\\
0 & \text{otherwise.}
\end{cases}
\]
A descent is charged at its source: nothing from $B$, the fee $g(n)$
from the integer $n$, and $c$ from $A$. The function $h$ is
nondecreasing for the order: if $z$ lies strictly below $y$ then
$h(z) \le h(y)$.

\emph{Quasi-pseudo-metric.} The toll is subadditive. Let $\tau(x,
z) = h(x) > 0$, so that $z$ lies strictly below $x$, and let $y$ be
any point. If $y$ lies strictly below $x$, then $\tau(x, y) =
h(x)$. Otherwise $y$ is level with $x$ or above it (any two points
are comparable, up to ties within $B$ and within $A$), so $z$ lies
strictly below $y$ as well, and $\tau(y, z) = h(y) \ge h(x)$. With
the $\rho$-part a pseudo-metric, $d$ is a quasi-pseudo-metric.

\emph{$T_0$, with one-way free moves.} Two-way distance $0$ needs
equal positions and both directions toll-free. Within a layer,
positions separate ($\theta$ is irrational). The same-position
pairs across layers are $\{\beta_y, \alpha_y\}$,
$\{\beta_{n\theta}, n\}$, and $\{n, \alpha_{n\theta}\}$; each is
free upward and charged downward, by $c$, by $g(n) > 0$, and by $c$
respectively. So $X$ is $T_0$ and not $R_0$: hypothesis (iii) fails
at every such pair.

\emph{(a).} Pass to a subsequence inside one layer with
$\rho$-convergent positions. Inside $B$ or $A$ no toll is charged,
and the subsequence is left K-Cauchy. Inside $\Z$, pass further to a
subsequence whose indices are constant, strictly increasing, or
strictly decreasing. The first two are toll-free. Along the third,
$d(n_j, n_l) = \rho(n_j\theta, n_l\theta) + g(n_j)$ for $l > j$, and
$g(n_j) \to 0$ because $n_j \to -\infty$: it is left K-Cauchy too.

\emph{(b).} Let $(x_k)$ be left K-Cauchy. Since $\rho(\pi x, \pi
x') \le d(x, x')$, the positions are $\rho$-Cauchy; let $q$ be their
limit. Then $\rho(\pi x_k, \pi z) \to \rho(q, \pi z)$ for every
$z$, so a point $y$ at position $q$ is a Yoneda limit as soon as
$\tau(x_k, z) \to \tau(y, z)$ for every $z$. There are four cases.

\emph{Case 1: the sequence is eventually constant.} It is its own
Yoneda limit.

\emph{Case 2: infinitely many terms lie in $A$.} Fix a tail whose
forward distances are below $c$. A term of $A$ in the tail is
followed only by terms of $A$, every other move from $A$ costing
$c$. So the sequence is eventually in $A$, and then $\tau(x_k, z)$
equals $c$ for $z \notin A$ and $0$ for $z \in A$, which is
$\tau(\alpha_q, z)$: the Yoneda limit is $\alpha_q$.

\emph{Case 3: only finitely many terms lie in $A$, and for some $M$
infinitely many terms are integers $\ge M$.} Fix a tail that avoids
$A$ and whose forward distances are below $g(M)$. An integer $n \ge
M$ in the tail has $g(n) \ge g(M)$, so it is followed only by
points that are not strictly below it, that is, by integers $\ge
n$. So the sequence is eventually a nondecreasing sequence of
integers $n_k$, and outside Case~1 it is unbounded. Then $\tau(x_k,
z) = 0$ for $z \in A$, and for $z \notin A$ it equals $g(n_k)$ as
soon as $n_k$ exceeds $z$ (at once if $z \in B$), which tends to $c$.
Again the Yoneda limit is $\alpha_q$.

\emph{Case 4: otherwise.} Only finitely many terms lie in $A$, and
for every $M$ only finitely many terms are integers $\ge M$. Then
$h(x_k) \to 0$, so $\tau(x_k, z) \le h(x_k) \to 0 = \tau(\beta_q,
z)$: the Yoneda limit is $\beta_q$.

In every case $d(\beta_q, x_k) = \rho(q, \pi x_k) \to 0$, so
$\beta_q$ is a lower limit: $X$ is left K-sequentially complete. The
case analysis does not use that the index set is $\N$. With
``infinitely many'' read as ``cofinally many'' and ``finitely many''
as ``not cofinally many'' it applies to a left K-Cauchy net; in
Case~3 the net is eventually a nondecreasing net of integers, which
is eventually constant if it is bounded. So $X$ is Yoneda complete.

\emph{(c).} By (a), the lower limits of (b), and
Lemma~\ref{lem:compact}, $\tau_d$ is sequentially compact, and so
is $\tau_{d^{-1}}$ by the Yoneda limits of (b). Compactness is seen
directly. No toll is charged from $B$, so the forward ball $B_d(\beta_z,
\varepsilon)$ is the full preimage $\pi^{-1}(B_\rho(z,
\varepsilon))$. Given a $\tau_d$-open cover, choose for each $z \in
T$ a member containing $\beta_z$, hence containing such a preimage;
finitely many of the arcs $B_\rho(z, \varepsilon_z)$ cover $T$, and
the corresponding members cover $X$. Likewise no toll is charged on
a move into $A$, so $B_{d^{-1}}(\alpha_z, \varepsilon) = \{x :
d(x, \alpha_z) < \varepsilon\}$ is again a full preimage, and the
same argument makes $\tau_{d^{-1}}$ compact.

\emph{(d).} For integers $n > m \ge 0$, $d_s(n, m) \ge g(n) \ge
g(0) > 0$: the recent integers are uniformly separated in the
metric $d_s$, which is therefore not compact.

\emph{The map.} Define $F(\beta_y) = \beta_{y + \theta}$, $F(n) = n
+ 1$, $F(\alpha_y) = \alpha_{y + \theta}$. It is a bijection of
each layer. Every position advances by $\theta$, so the $\rho$-part
of every distance is preserved and only tolls compare. $F$
preserves the relation ``strictly below'', and $h(Fx) \ge h(x)$
because $g$ is nondecreasing; so $\tau(Fx, Fx') \ge \tau(x, x')$,
and $F$ is non-contractive. It is not an isometry:
\[
d(n + 1, n) - d(n, n - 1) = g(n + 1) - g(n) \qquad \text{for every } n,
\]
and this is positive for some $n$, since $g$ is not constant.
\end{proof}

By (a), (c), and the lower limits of (b), the space also answers
\cite[Question~7.5]{Duzer2026} positively.

\begin{remark}[Where the two proofs stop]
\label{rem:stops}
Take an integer $a = n$. In the proof of Theorem~\ref{thm:upper} the
forward returns are the integers $n + m_i$ with $(n + m_i)\theta \to
n\theta$, and their upper limit is $y = \alpha_{n\theta}$. Indeed
$d(n, \alpha_{n\theta}) = 0$, as the proof shows it must be; but
$d(\alpha_{n\theta}, n) = c$. In the proof of
Theorem~\ref{thm:lower} the backward orbit returns through the
integers $n - m_i$, and their lower limit is $p =
\beta_{n\theta}$. Indeed $d(\beta_{n\theta}, n) = 0$; but $d(n,
\beta_{n\theta}) = g(n) > 0$. The (iii)-step in the proof of
Lemma~\ref{lem:creturns} breaks on the same pair
$(\beta_{n\theta}, n)$, although the conclusion of the lemma
survives in this space, by Lemma~\ref{lem:returns}. Each argument
stops at a step that uses hypothesis (iii), on a one-way free move,
and nowhere else.
\end{remark}

\begin{remark}[The fee must vanish, but must not be switched off]
\label{rem:fee}
The two conditions on the deep end of the fee are both needed, and
each degenerate choice recovers a known obstruction.

If $\inf g > 0$, property (a) fails: along the sequence $-1, -2,
-3, \dots$ every forward step is a descent between integers and
costs at least $\inf g$, so no subsequence is left K-Cauchy.

If the fee is switched off on the deep integers, $g = c$ on $\{n
\ge 0\}$ and $g = 0$ on $\{n < 0\}$, which is the toll of
Section~\ref{sec:toll}, the space is no longer $T_0$: each deep
integer $n$ is isomorphic to its copy $\beta_{n\theta}$. With the
top layer removed this is the compact, left K-sequentially complete
space of \cite[Proposition~7.3]{Duzer2026}. Passing to the skeletal
quotient does not repair it. A bijective non-contractive map descends to the quotient
by $\cong$ exactly when it preserves $\cong$ (its inverse is
non-expansive and always does), and the shift breaks the class
$\{-1, \beta_{-\theta}\}$: the images $0$ and $\beta_0$ satisfy
$d(\beta_0, 0) = 0$ and $d(0, \beta_0) = c$. The quotient is $B \sqcup \{0, 1, 2, \dots\} \sqcup A$; one
checks it to be the $T_0$ quotient of the sequential Yoneda
completion of the toll spiral, with $\beta_0$ the point $\hat y$
of Section~\ref{sec:toll}. On it one can still send
$\beta_{-\theta}$ to $0$ and let $F$ act as before elsewhere; the
result is non-contractive, injective, and not an isometry, but it
misses $\beta_0$. A positive fee that vanishes at $-\infty$ keeps
each deep integer apart from its copy, and that is all the
bijection needs.
\end{remark}

We turn to bijectivity in Theorem~\ref{thm:lower}, and to
hypotheses (i) and (ii) in Theorem~\ref{thm:upper}. One distance on
the natural numbers settles all three.

\begin{samepage}
\begin{proposition}[Bijectivity and hypotheses (i) and (ii) cannot
be dropped]\label{prop:shift}
Let $g : \N \to (0, \infty)$ be nondecreasing and not constant, where
$\N = \{0, 1, 2, \dots\}$, and put
\[
d(n, m) = g(n)\, 2^{-(m - n)} \ (m > n), \qquad d(n, m) = g(n) \ (m
< n), \qquad d(n, n) = 0 .
\]
Let $S(n) = n + 1$ be the shift.
\begin{enumerate}
\item[(a)] $(\N, d)$ is a quasi-metric space, its topology $\tau_d$
is compact, and every sequence in it has a left K-Cauchy
subsequence.
\item[(b)] $S$ is non-contractive for $d$ and for $d^{-1}$, it is not
surjective, and it is not an isometry.
\item[(c)] $(\N, d^{-1})$ is a quasi-metric space with compact
conjugate topology. It satisfies hypotheses (ii) and (iii), and not
(i).
\end{enumerate}
Hence bijectivity cannot be dropped from Theorem~\ref{thm:lower} or
from Corollary~\ref{cor:compact}, even for hereditarily precompact
spaces; hypothesis (ii) cannot be dropped from
Theorem~\ref{thm:upper}, by $(\N, d)$; and hypothesis (i) cannot be
dropped from it, by $(\N, d^{-1})$.
\end{proposition}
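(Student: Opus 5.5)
The plan is to verify the three parts in order, directly from the definition of $d$, and then read off the sharpness statements from the theorems they are meant to contradict.

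\emph{Part (a).} The $T_1$ axiom is immediate, since $g > 0$. For the triangle inequality $d(n,l) \le d(n,m) + d(m,l)$ I would split on the position of $l$ relative to $n$, and then on where $m$ sits.
\begin{itemize}
\item[$l < n$.] Here $d(n,l) = g(n)$. If $m < n$, the first move already costs $g(n)$. If $m > n$, then $l < m$, so the second move costs $g(m) \ge g(n)$, since $g$ is nondecreasing.
\item[$l > n$.] Here $d(n,l) = g(n)2^{-(l-n)}$. If $m < n$, the first move costs $g(n)$, which dominates. If $n < m \le l$, the first move alone costs $g(n)2^{-(m-n)} \ge g(n)2^{-(l-n)}$. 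If $m > l$, the second move is a descent costing $g(m) \ge g(n)$.
\end{itemize}

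Compactness of $\tau_d$ should follow as in Example~\ref{ex:lowerchain}. Since $d(0,m) = g(0)2^{-m} \to 0$, every open set containing $0$ contains all but finitely many points. For left K-Cauchy subsequences, take a constant subsequence if some value repeats infinitely often. Otherwise take a strictly increasing subsequence, thinned so that $g(n_j)2^{-(n_{j+1}-n_j)} < 2^{-j}$. Then for $l > j$ we get $d(n_j,n_l) \le g(n_j)2^{-(n_{j+1}-n_j)} < 2^{-j}$. I expect this thinning to be the only point needing care, since $g$ may be unbounded and a plain increasing subsequence need not be left K-Cauchy.

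\emph{Part (b).} $S$ maps pairs with $m > n$ to pairs with the same index gap and descents to descents. It multiplies the relevant fee $g(n)$ by nothing and replaces it with $g(n+1) \ge g(n)$. So $d(Sn,Sm) \ge d(n,m)$ for all pairs, and this is the same set of inequalities for $d^{-1}$. Clearly $0 \notin S(\N)$. Choosing $n$ with $g(n+1) > g(n)$, the pair $(n, n+1)$ gives $d(n+1,n+2) = g(n+1)/2 > g(n)/2 = d(n,n+1)$, so $S$ is not an isometry.

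\emph{Part (c).} $d^{-1}$ is a quasi-metric, since $T_1$ and the triangle inequality are symmetric under conjugation. Its conjugate topology is $\tau_d$, which is compact by (a). Hypothesis (iii) follows from $T_1$. For (ii) I would invoke the first sentence of Lemma~\ref{lem:compact}(c), applied to $(\N,d^{-1})$: its conjugate topology $\tau_d$ is sequentially compact by Lemma~\ref{lem:compact}(a). Hypothesis (i) fails along the sequence $0,1,2,\dots$. For $l > j$ we have $d^{-1}(j,l) = d(l,j) = g(l) \ge g(0) > 0$, so no strictly increasing subsequence is left K-Cauchy for $d^{-1}$, and the sequence has no constant subsequence.

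\emph{The consequences.} $(\N,d)$ is a compact, hereditarily precompact quasi-metric space, by (a) and \cite[Theorem~3]{KMRV1993}, and $S$ is a non-contractive non-isometry. So bijectivity cannot be dropped from Theorem~\ref{thm:lower} or Corollary~\ref{cor:compact}. The same space satisfies (i) and (iii), so Theorem~\ref{thm:upper} must fail at (ii). The space $(\N,d^{-1})$ satisfies (ii) and (iii) and carries the same non-isometry, which shows that (i) cannot be dropped.
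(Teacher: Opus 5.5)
Your proposal is correct and follows the paper's proof essentially step by step. The steps match: the same case analysis for the triangle inequality (you split on the position of $l$ relative to $n$, the paper on $m$), the same compactness argument through the point $0$, and the same thinning to a left K-Cauchy subsequence; then the same pair $(n, n+1)$ for the non-isometry, and for part~(c) the same appeal to Lemma~\ref{lem:compact}(a) and~(c) together with the sequence $0, 1, 2, \dots$.
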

\end{samepage}

\begin{proof}
(a) $T_1$ is clear, since $g > 0$. For the triangle inequality
$d(n, l) \le d(n, m) + d(m, l)$ with $n, m, l$ distinct, note that
$d(n, \cdot) \le g(n)$, that a move from $n$ to a lower index costs
exactly $g(n)$, and that $g$ is nondecreasing. If $m < n$, the
first step costs $g(n) \ge d(n, l)$. If $m > n$ and $m > l$, the
second step is a descent and costs $g(m) \ge g(n) \ge d(n, l)$. In
the remaining case $n < m < l$, and $d(n, m) = g(n)\, 2^{-(m - n)}
\ge g(n)\, 2^{-(l - n)} = d(n, l)$. The topology is compact: an open
set containing $0$ contains a ball $B_d(0, \varepsilon)$, hence
every $m$ with $g(0)\, 2^{-m} < \varepsilon$, that is, all but
finitely many points. Finally, a sequence has a constant
subsequence or one with strictly increasing values $n_1 < n_2 <
\cdots$, which can be thinned so that $g(n_j)\, 2^{-(n_{j+1} - n_j)}
< 2^{-j}$ for every $j$; then $d(n_j, n_l) \le g(n_j)\, 2^{-(n_{j+1}
- n_j)} < 2^{-j}$ for $l > j$, and the subsequence is left K-Cauchy.

(b) For $m > n$, $d(n + 1, m + 1) = g(n + 1)\, 2^{-(m - n)} \ge
d(n, m)$, and for $m < n$, $d(n + 1, m + 1) = g(n + 1) \ge d(n,
m)$. The same inequalities, read with the arguments exchanged, are
non-contractivity for $d^{-1}$. The shift misses $0$, and $d(n + 1,
n + 2) - d(n, n + 1) = (g(n + 1) - g(n))/2$ is positive for
some $n$.

(c) The conjugate of a quasi-metric is a quasi-metric, so (iii)
holds, and the conjugate topology of $d^{-1}$ is $\tau_d$, which is
compact; (ii) for $d^{-1}$ follows by Lemma~\ref{lem:compact}(a)
and~(c). But $d^{-1}(n, m) = d(m, n) = g(m) \ge g(0)$ for $m > n$,
so the sequence $0, 1, 2, \dots$ has no left K-Cauchy subsequence
for $d^{-1}$.

The last statements follow from (a)--(c): $(\N, d)$ is compact and
hereditarily precompact, so by Theorem~\ref{thm:upper} it cannot
satisfy (ii); and $(\N, d^{-1})$ satisfies (ii) and (iii).
\end{proof}

\needspace{13\baselineskip}
By Theorem~\ref{thm:lower}, every bijective non-contractive
self-map of $(\N, d)$ is an isometry; the shift escapes only by
missing a point. The remaining hypothesis, compactness in
Theorem~\ref{thm:lower}, cannot be dropped either: the toll spiral
of Section~\ref{sec:toll} is a hereditarily precompact quasi-metric
space with a bijective non-contractive non-isometry. A simpler
witness, which is not precompact, is the quasi-metric on $\Z$ of
\cite[Proposition~5.1]{Agyingi2026}, with the inverse of its shift.
In summary:
\nopagebreak
\begin{center}
\begin{tabular}{@{}lll@{}}
theorem & hypothesis & cannot be dropped, by\\
\hline
Theorem~\ref{thm:lower} & compactness & the toll spiral
\cite{Duzer2026}; \cite[Proposition~5.1]{Agyingi2026}\\
 & (iii) & Proposition~\ref{prop:vanish}\\
 & bijectivity & $(\N, d)$ of Proposition~\ref{prop:shift}\\
Theorem~\ref{thm:upper} & (i) & $(\N, d^{-1})$ of
Proposition~\ref{prop:shift}\\
 & (ii) & $(\N, d)$ of Proposition~\ref{prop:shift}; the space of
\cite[Theorem~2]{Zav2023}\\
 & (iii) & Proposition~\ref{prop:vanish}
\end{tabular}
\end{center}

\section{Asymmetric unit balls}\label{sec:balls}

The central question of the plasticity literature asks whether
the unit ball of a Banach space is plastic \cite{CKOW2016}; we
have not found its quasi-metric form asked. An \emph{asymmetric norm} on
a real vector space $X$ \cite{Cobzas2013} is a map $p : X \to
[0, \infty)$ with $p(\alpha x) = \alpha p(x)$ for $\alpha \ge 0$,
$p(x + y) \le p(x) + p(y)$, and $p(x) = p(-x) = 0$ only for $x = 0$.
It defines the quasi-metric $d_p(x, y) = p(y - x)$, the conjugate
asymmetric norm $\bar p(x) = p(-x)$, and the norm
$p^s = \max(p, \bar p)$. We
call $p$ \emph{$T_1$} if $p(x) > 0$ for every $x \ne 0$; for $d_p$
the axioms $T_1$ and $R_0$ coincide, since $d_p(x, y) = 0 = d_p(y, x)$
already forces $x = y$. The objects are the forward ball and sphere
$B_p = \{p \le 1\}$, $S_p = \{p = 1\}$ and the symmetric ball and
sphere $B_{p^s} = B_p \cap (-B_p)$, $S_{p^s}$, each with the
restriction of $d_p$. The backward ball $-B_p$ adds nothing:
negation carries $(-B_p, d_{\bar p})$ isometrically onto
$(B_p, d_p)$, and a self-map of any set is non-contractive for
$d_p$ exactly when it is for $d_{\bar p}$, the two inequalities
being the same with the points interchanged.

\begin{proposition}[Finite-dimensional asymmetric balls are rigid]
\label{prop:balls}
Let $X$ be finite-dimensional and $p$ a $T_1$ asymmetric norm on
$X$. Then every non-contractive self-map of $B_p$, of $S_p$, of
$B_{p^s}$ or of $S_{p^s}$, bijective or not, is an isometry of
$d_p$. The same holds on every subset of $X$ compact for the
topology of $X$.
\end{proposition}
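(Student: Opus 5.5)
Let $K$ be any of the four sets, or more generally any subset of $X$ compact for the topology of $X$. The plan is to show that $(K, d_p)$ has a compact symmetrization and satisfies (iii), and then to apply Corollary~\ref{cor:sym}, which covers every non-contractive self-map, bijective or not.

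\emph{Step 1: norm equivalence and continuity.} Since $X$ is finite-dimensional, fix any norm $\|\cdot\|$ on $X$; all norms are equivalent and induce the usual topology. The asymmetric norm $p$ is sublinear, hence convex, and finite everywhere, so it is continuous on $X$. The same holds for $\bar p$ and for $p^s = \max(p, \bar p)$. The $T_1$ hypothesis gives $p^s(x) > 0$ for $x \ne 0$, so $p^s$ is a genuine norm and is equivalent to $\|\cdot\|$.

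\emph{Step 2: compactness of the four sets.} $B_p$ and $S_p$ are closed by continuity of $p$. For boundedness, $p$ attains a positive minimum $\delta$ on the $\|\cdot\|$-unit sphere, by $T_1$ and compactness of that sphere, so $p(x) \ge \delta\|x\|$ and $B_p$ is bounded. Hence $B_p$ and $S_p$ are compact, and so are $B_{p^s}$ and $S_{p^s}$, being closed subsets of $B_p$.

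\emph{Step 3: apply Corollary~\ref{cor:sym}.} On $K$ the symmetrization of $d_p$ is $d_s(x,y) = p(y-x) + p(x-y)$. This lies between $p^s(y-x)$ and $2p^s(y-x)$, so $d_s$ is a metric equivalent to the norm, and $(K, d_s)$ is compact. Hypothesis (iii) is the $T_1$ axiom for $d_p$, which holds because $p(y-x) = 0$ forces $x = y$. Corollary~\ref{cor:sym} then yields the conclusion for every non-contractive self-map of $K$. Equivalently, one can invoke Proposition~\ref{prop:smyth}(c) directly, since $d_s$ is a compact metric.

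\emph{Main obstacle.} The only substantive point is the lower bound $p \ge \delta\|\cdot\|$, which makes the forward ball bounded. This uses $T_1$ essentially: without it, $B_p$ can be unbounded, for example a half-space, and compactness fails. Everything else is routine.
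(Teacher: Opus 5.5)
Your proof is correct, but it reaches the conclusion by a different route from the paper's. The paper uses the two-sided comparability $m\|y-x\| \le d_p(x,y) \le M\|y-x\|$ to verify hypotheses (i)--(iii) directly on a compact $K$:
\begin{itemize}
\item (i) follows from the upper bound;
\item (ii) follows from the lower bound together with compactness of $K$;
\item (iii) follows from $T_1$.
\end{itemize}
It then applies the one-sided Theorem~\ref{thm:upper}.

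You instead show that $(K,d_s)$ is a compact metric space and reduce to the two-sided case. Through Corollary~\ref{cor:sym} this is the same engine with one extra layer, since that corollary re-derives (i) and (ii) and then calls Theorem~\ref{thm:upper}. Your alternative through Proposition~\ref{prop:smyth}(c) is genuinely different and more elementary. It is the classical Freudenthal--Hurewicz theorem for the compact metric $d_s$, plus the observation that a map non-contractive for $d_p$ that is a $d_s$-isometry must be a $d_p$-isometry. None of the new theorems is needed.

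That route also buys more than you claim. It uses separation only to make $d_s$ a metric, that is, only $T_0$, and every asymmetric norm satisfies $T_0$ by definition. For the same reason $p^s$ is a norm whether or not $p$ is $T_1$, so $T_1$ is not needed in your Step~1 either. Consequently your argument shows that, in finite dimension, $B_{p^s}$, $S_{p^s}$ and every compact subset of $X$ are rigid for every asymmetric norm, $T_1$ or not. $T_1$ is needed only to make $B_p$ and $S_p$ bounded, which is exactly the point you single out as the main obstacle. This is consistent with Example~\ref{ex:halfline}, where the symmetric ball $[-1,1]$ is rigid although $R_0$ fails.
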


\begin{proof}
Fix a norm $\|\cdot\|$ on $X$. Being sublinear, $p$ is convex and
finite, hence continuous on the finite-dimensional space $X$; it is
positive on the compact unit sphere of $\|\cdot\|$, so there are
$0 < m \le M$ with $m\|x\| \le p(x) \le M\|x\|$. Hence $m\|y - x\|
\le d_p(x, y) \le M\|y - x\|$. Let $K \subseteq X$ be compact. Every
sequence in $K$ has a convergent subsequence, which is left K-Cauchy
for $d_p$ by the upper comparison: hypothesis (i). A left K-Cauchy
sequence for $d_p$ is Cauchy for $\|\cdot\|$ by the lower
comparison, converges to some $y \in K$, and $d_p(x_n, y) \le M\|y -
x_n\| \to 0$: hypothesis (ii). $d_p$ is $T_1$, hence $R_0$:
hypothesis (iii). Theorem~\ref{thm:upper}
applies. The four sets are closed, as $p$ is continuous, and bounded,
as $p \ge m\|\cdot\|$, hence compact.
\end{proof}

For a norm $p$ the proposition is the classical finite-dimensional
case \cite{FH1936, NPW2006}; its content is the asymmetric case,
where $d_p$ is not a metric. The $T_1$ hypothesis cannot be
dropped, for the forward ball at least.

\needspace{7\baselineskip}
\begin{example}[A half-line]\label{ex:halfline}
On $X = \R$ let $p(x) = \max(x, 0)$, an asymmetric norm that is not
$T_1$. Then $B_p = (-\infty, 1]$, unbounded, with $d_p(x, y) = (y -
x)^+$, and $F(x) = 2x - 1$ is a bijection of $B_p$ with $d_p(Fx, Fy)
= 2 d_p(x, y)$: non-contractive and not an isometry. The symmetric
ball $B_{p^s} = [-1, 1]$ behaves differently: a non-contractive
self-map $F$ of it satisfies $F(y) - F(x) \ge y - x$ for $x < y$,
so $F(1) - F(-1) \ge 2$ forces $F(\pm 1) = \pm 1$, and then $2 =
(F(1) - F(y)) + (F(y) - F(x)) + (F(x) - F(-1)) \ge 2$ forces $F(y)
- F(x) = y - x$ throughout: $F$ is the identity. So in dimension
one, without $T_1$, the forward ball is not plastic and the
symmetric ball is, although $R_0$ fails on both.
\end{example}

In infinite dimension the symmetric ball and sphere are not
plastic in general, even when the symmetrization is the Hilbert
norm.

\needspace{6\baselineskip}
\begin{example}[A shift on the Hilbert ball]\label{ex:shift}
On $X = \ell_2(\Z)$ let $\delta = 1/\sqrt 2$, let
$A = \{f \in B_X : f_n \le \delta \text{ for all } n \ge 0\}$,
and let $p(x) = \sup_{f \in A}\, \langle f, x\rangle$. Then $p$
is sublinear, and $\delta B_X \subseteq A \subseteq B_X$ gives
$\|x\|/\sqrt 2 \le p(x) \le \|x\|$: a $T_1$ asymmetric norm. Moreover $A \cup
(-A) = B_X$: if $f_n > \delta$ for some $n \ge 0$, then $\sum_{m
\ne n} f_m^2 < 1/2$, so every other coordinate of $f$ is below
$\delta$ in modulus and $-f \in A$. Hence $p^s = \|\cdot\|$: the
symmetric ball is the Hilbert ball and the symmetric sphere the
Hilbert sphere. Let $T$ be the left shift, $(Tx)_n = x_{n+1}$, a
unitary. Then $p(Tx) = \sup_{g \in SA}\, \langle g, x\rangle$
with $S$ the right shift, and $SA = \{g \in B_X : g_n \le \delta
\text{ for all } n \ge 1\} \supseteq A$, so $p \circ T \ge p$
everywhere: $T$ is non-contractive for $d_p$ on all of $X$, and a
bijection of $B_{p^s}$ and of $S_{p^s}$. It is not an isometry:
$d_p(0, e_0) = p(e_0) = \delta$, while $d_p(T0, Te_0) = p(e_{-1})
= 1$; on the sphere, $x = -e_0/2 + (\sqrt 3/2)\,e_1$ and $y = x +
e_0$ are unit vectors with $d_p(x, y) = \delta$ and $d_p(Tx, Ty) =
1$. So for $p^s$ the Hilbert norm, complete and strictly convex,
neither $(B_{p^s}, d_p)$ nor $(S_{p^s}, d_p)$ is plastic. The
mechanism is that of the non-plastic ellipsoids of the normed
literature (\cite[Example~1.1]{Zav2018}, attributed there to
\cite{CKOW2016}): once the body and the metric decouple, a shift
succeeds.
\end{example}

No linear map can witness failure on the forward ball, even
without $T_1$: a linear non-contractive self-map $F$ of $B_p$,
bijective or not, has $p \circ F \ge p$ (non-contractivity
against $x = 0$, scaled by homogeneity) and $p \circ F \le p$
(since $F(B_p) \subseteq B_p$ and $p$ is the Minkowski functional
of its own ball), so it is a $d_p$-isometry. On the sphere, if
$F(S_p) = S_p$ with $F$ linear and $p$ $T_1$, normalizing any $x
\ne 0$ to $x/p(x) \in S_p$ gives the same. The shift of
Example~\ref{ex:shift} is linear, so nothing of its kind reaches
$B_p$ or $S_p$; and the map of Example~\ref{ex:halfline} is
affine and not linear, as it must be.

\begin{question}\label{q:balls}
The symmetric objects are settled by Example~\ref{ex:shift}; the
forward objects carry the question. Let $p$ be a $T_1$ asymmetric
norm on an infinite-dimensional space. Is $(B_p, d_p)$
expand-contract plastic? Is $(S_p, d_p)$? For a complete norm $p$
the ball question is that of \cite{CKOW2016}, answered there for
strictly convex $p$; the asymmetric strictly convex case, where the
forward sphere $S_p$ contains no segment, is the natural first
target, and any counterexample must be nonlinear. Whether
plasticity of $B_p$ fails for every asymmetric norm that is not
$T_1$, as in Example~\ref{ex:halfline}, and whether ball and
sphere must be treated separately here, as they are for norms
\cite{LZ2026}, we also leave open.
\end{question}

Neither theorem of Section~\ref{sec:main} reaches the
infinite-dimensional case: hypothesis (i) fails on every
infinite-dimensional ball, and for $T_1$ norms so does compactness
of either topology; on the symmetric ball nothing could
(Example~\ref{ex:shift}). Under (i) for $(B_p, d_p)$, the subset
$B_{p^s}$ satisfies (i) for $d_p$ and, by negation, for $d_{\bar
p}$, so every sequence in $B_{p^s}$ has a subsequence Cauchy for
$p^s$: $B_{p^s}$ is totally bounded and $X$ finite-dimensional. If
$\tau_{d_{\bar p}}$ were sequentially compact on $B_p$, a sequence
$(x_n)$ in $B_{p^s}$ and $(-x_n)$ would have a common subsequence
with $p(y - x_n) \to 0$ and $p(y' + x_n) \to 0$ for some $y, y' \in
B_p$; then $p(y + y') = 0$, $y' = -y$ by $T_1$, and $p^s(x_n - y)
\to 0$: $B_{p^s}$ would be compact for $p^s$. Lower limits in place
of upper limits give the same for $\tau_{d_p}$.

\section*{Data availability}

{\emergencystretch=2em
The two rigidity theorems are machine-checked in Lean~4 over
\texttt{mathlib}, in files archived with this paper at Zenodo
(\href{https://doi.org/10.5281/zenodo.23406134}{doi:\nolinkurl{10.5281/zenodo.23406134}}), for a
real-valued quasi-pseudo-metric and with the hypotheses taken
literally. The file \texttt{LowerLimits.lean} contains
Theorem~\ref{thm:lower}
(\texttt{compact\_forces\_\allowbreak plasticity}, and
\texttt{conj\_compact\_forces\_\allowbreak plasticity} for the
conjugate side), Lemma~\ref{lem:creturns}
(\texttt{forward\_return\_\allowbreak of\_compact}), the recurrence
of the backward orbit (\texttt{back\_return}), the statement that the
inverse map is an isometry (\texttt{inverse\_isometry}), the last
statement of Corollary~\ref{cor:compact} in its quasi-pseudo-metric
form, with hypotheses (i) and (iii) and left K-sequential
completeness
(\texttt{lower\_limits\_force\_\allowbreak bijective\_isometries}),
and the non-trivial direction of Lemma~\ref{lem:compact}(d)
(\texttt{symm\_seq\_compact}). The file \texttt{UpperLimits.lean}
contains Theorem~\ref{thm:upper}
(\texttt{upper\_limits\_\allowbreak force\_isometries}), the
recurrence of its proof, a backward return for one point and a
forward return for the other at a common time
(\texttt{mixed\_return}),
Corollary~\ref{cor:yoneda} with the $T_1$ axiom and sequential
Yoneda limits as hypotheses
(\texttt{yoneda\_\allowbreak forces\_isometries}), and the theorem
with (ii) replaced by sequential compactness of the conjugate
topology (\texttt{conj\_seq\_compact\_\allowbreak
forces\_isometries}). In these files sequential compactness is
stated as the existence, for every sequence, of a subsequence with a
lower limit, respectively an upper limit; the subsequence case of
Lemma~\ref{lem:cluster} is checked as
\texttt{LowerLimit.\allowbreak of\_subseq} and
\texttt{UpperLimit.\allowbreak of\_subseq}. The file
\texttt{ComparableCompact.lean} (new at this version) contains the
mechanism of Proposition~\ref{prop:balls}
(\texttt{comparable\_\allowbreak compact\_\allowbreak forces\_\allowbreak isometries}):
a quasi-pseudo-metric two-sidedly comparable, with positive lower
constant, to the metric of a compact metric space satisfies
(i)--(iii), so Theorem~\ref{thm:upper} applies; and the exclusion
fact before Question~\ref{q:balls}
(\texttt{linear\_\allowbreak exclusion}): a linear
non-contractive self-map of the forward unit ball, bijective or
not, preserves $p$, from positive homogeneity and nonnegativity
alone. All of these depend only
on the axioms \texttt{propext}, \texttt{Classical.choice}, and
\texttt{Quot.sound}. Not mechanized: the equivalence of hypothesis
(i) with hereditary precompactness \cite[Theorem~3]{KMRV1993}, which
is cited; the cluster-point form of Lemma~\ref{lem:cluster};
Lemma~\ref{lem:compact}(a), on which the passage from countable
compactness in Corollary~\ref{cor:compact} rests; the converse
halves of Lemma~\ref{lem:compact}(b) and~(c); and the deduction of
Corollary~\ref{cor:sym} from \texttt{symm\_seq\_compact}; and, in
Section~\ref{sec:balls}, Examples~\ref{ex:halfline}
and~\ref{ex:shift}, the sphere case of the linear exclusion, the
closing infinite-dimensional argument, and the two-sided
comparability of a $T_1$ asymmetric norm with a norm in finite
dimension (continuity of a sublinear function and its positive
minimum on the sphere), which supplies the hypothesis of the
mechanized theorem above.\par}

Proposition~\ref{prop:shift} is machine-checked in the file
\texttt{ShiftExample.lean}, for an arbitrary positive nondecreasing
weight and, in the theorems \texttt{counterexample} and
\texttt{conj\_counterexample}, for the weight $g(n) = n + 1$: the
quasi-metric and $T_1$ properties, sequential compactness of
$\tau_d$, hypothesis (i) (\texttt{seqHP}), the non-contractivity,
non-surjectivity, and strict expansion of the shift, and for the
conjugate distance hypothesis (ii) and the failure of (i). Not
mechanized there: compactness of $\tau_d$ beyond its sequential
form.

Proposition~\ref{prop:vanish} is machine-checked in the file
\texttt{VanishingToll.lean}, its sequential parts included, over an
abstract metric space with an isometric bijection and an injective
orbit, and an abstract fee (positive, nondecreasing, bounded by $c$,
tending to $0$ at $-\infty$ and to $c$ at $+\infty$): the
quasi-pseudo-metric and $T_0$ properties, the one-way free moves,
property (a) for a compact position space (\texttt{seqHP}), the
Yoneda limits and the lower-side limits of (b) for a complete one
(\texttt{yoneda\_complete}, \texttt{lower\_complete}), the uniform
separation of the recent integers in the symmetrization, and the
map: bijective, non-contractive, and strictly expanding wherever the
fee increases. The theorem \texttt{counterexample} assembles, for
the circle, an irrational rotation, and the fee $\min(1, 2^n)$, the
quasi-pseudo-metric and $T_0$ properties, property (a), the Yoneda
limits, and the three properties of the map; the free moves, the
lower limits, and the uniform separation are proved at the abstract
level, which covers the instance. Irrationality enters only through
the injectivity of $n \mapsto n\theta$. Not mechanized there: the compactness statements (c),
the passage
from uniform separation to non-compactness in (d), and the extension
of (b) from sequences to nets.

The companion file \texttt{TollExamples.lean} supplies the circle
instance (\texttt{circle\_instance}) and checks the finite parts of
the switched-off case of Remark~\ref{rem:fee}: the coincident pair
at $-1$ and the separation of its images under the shift, and, on
the skeletal quotient, the profile of the modified map
(non-contractive, expanding one pair by exactly $c$, injective,
missing exactly $\beta_0$ and nothing else). It also retains the
finite parts of a two-charge variant that is not used in this paper.

Example~\ref{ex:sym} is machine-checked in the file
\texttt{SymExample.lean}, for an arbitrary strictly decreasing
profile with values in $(1, 3/2)$ and, in the theorem
\texttt{example\_exists}, for the profile displayed: the
quasi-metric and $T_1$ properties, sequential compactness of
$\tau_d$, plasticity, obtained from
\texttt{compact\_forces\_\allowbreak plasticity}, and for the
inverse $n \mapsto n + 1$ of the map $G$ that it is bijective,
nonexpansive for the sum and for the maximum symmetrization, and
strictly contracting on the pair $(0, 1)$ for both. Not mechanized
there: compactness of $\tau_d$ beyond its sequential form.

Every printed theorem of the seven files depends only on axioms among
the three above.
\begin{samepage}
The files accompany the paper; the SHA-256 of
each, toolchain pins included, is recorded in the manifest
\texttt{ArtifactHashes.txt}, whose own SHA-256 is:
\nopagebreak
\begin{center}\small
\texttt{f88dd9cb0d16ec19639b2642a63c47066ba6e0eb7bef5d63dd58597a6479076d}
\end{center}
\end{samepage}

\needspace{6\baselineskip}

\end{document}